\theoremstyle{plain}
	\newtheorem{theorem}{Theorem}[subsection]
	\newtheorem{proposition}[theorem]{Proposition}
	\newtheorem{lemma}[theorem]{Lemma}
	\newtheorem{corollary}[theorem]{Corollary}
\theoremstyle{remark}
	\newtheorem{remark}{Remark}
\theoremstyle{definition}
	\newtheorem{definition}{Definition}[subsection]
	\newcommand{\rarr}{\ensuremath{\rightarrow}\xspace}
	\newcommand{\rsim}{\ensuremath{\mathord{\sim}}\xspace}
	\newcommand{\leql}{\ensuremath{\leqslant}\xspace}
	\newcommand{\Iff}{\quad if\textcompwordmark f\quad}
	\newcommand{\TBA}{\ensuremath{\mathrm{TBA}}\xspace}
	\newcommand{\Int}{\ensuremath{\mathsf{Int}}\xspace}
		\newcommand{\EInt}{\ensuremath{\mathcal{E}\Int}\xspace}
	\newcommand{\Cl}{\ensuremath{\mathsf{CL}}\xspace}
	\newcommand{\Sf}{\ensuremath{\mathsf{S4}}\xspace}
		\newcommand{\ESf}{\ensuremath{\mathcal{E}\Sf}\xspace}
	\newcommand{\Grz}{\ensuremath{\mathsf{Grz}}\xspace}
	\newcommand{\Nf}{\ensuremath{\mathsf{N4}^{\bot}}\xspace}
		\newcommand{\ENf}{\ensuremath{\mathcal{E}\Nf}\xspace}
	\newcommand{\Nk}{\ensuremath{\mathsf{NK}^{\bot}}\xspace}
	\newcommand{\Bsf}{\ensuremath{\mathsf{BS4}}\xspace}
		\newcommand{\EBsf}{\ensuremath{\mathcal{E}\Bsf}\xspace}
	\newcommand{\Tb}{\ensuremath{\mathrm{T}_{\mathbf{B}}}\xspace}
	\newcommand{\TGT}{\ensuremath{\mathrm{T}}\xspace}
	\newcommand{\taub}{\ensuremath{\tau_{\mathbf{B}}}\xspace}
	\newcommand{\langu}{\ensuremath{\mathcal{L}}\xspace}
	\newcommand{\langui}{\ensuremath{\langu_{i}}\xspace}
	\newcommand{\langus}{\ensuremath{\langu_{\rsim}}\xspace}
	\newcommand{\langub}{\ensuremath{\langu^{\Box}}\xspace}
	\newcommand{\langubs}{\ensuremath{\langu_{\rsim}^{\Box}}\xspace}
	\newcommand{\Fml}{\ensuremath{\mathrm{Fm}_{\langu}}\xspace}
	\newcommand{\Fmli}{\ensuremath{\mathrm{Fm}_{\langui}}\xspace}
	\newcommand{\Fmls}{\ensuremath{\mathrm{Fm}_{\langus}}\xspace}
	\newcommand{\Fmlb}{\ensuremath{\mathrm{Fm}_{\langub}}\xspace}
	\newcommand{\Fmlbs}{\ensuremath{\mathrm{Fm}_{\langubs}}\xspace}
	\newcommand{\bfA}{\ensuremath{\mathbf{A}}\xspace}
	\newcommand{\FdA}{\ensuremath{\mathcal{F}_{d}(\bfA)}\xspace}
	\newcommand{\IA}{\ensuremath{\mathcal{I}(\bfA)}\xspace}
	\newcommand{\clA}{\ensuremath{\mathcal{A}}\xspace}
	\newcommand{\bfB}{\ensuremath{\mathbf{B}}\xspace}
	\newcommand{\FbB}{\ensuremath{\mathcal{F}_{\Box}(\bfB)}\xspace}
	\newcommand{\IdiB}{\ensuremath{\mathcal{I}_{\Diamond}(\bfB)}\xspace}
	\newcommand{\clB}{\ensuremath{\mathcal{B}}\xspace}
	\newcommand{\GsB}{\ensuremath{\mathsf{G}(\mathbf{B})}\xspace}
	\newcommand{\GB}{\ensuremath{\mathcal{G}(\mathbf{B})}\xspace}
	\newcommand{\TL}{\ensuremath{\mathcal{T}(L)}\xspace}
	\newcommand{\TM}{\ensuremath{\mathcal{T}(M)}\xspace}
	\newcommand{\WL}{\ensuremath{\mathcal{W}_{L}}\xspace}
		\newcommand{\nablaL}{\ensuremath{\nabla_{L}}\xspace}
		\newcommand{\DeltaL}{\ensuremath{\Delta_{L}}\xspace}
	\newcommand{\WM}{\ensuremath{\mathcal{W}_{M}}\xspace}
		\newcommand{\DeltaM}{\ensuremath{\Delta_{M}}\xspace}
	\newcommand{\TwBND}{\ensuremath{Tw(\bfB, \nabla, \Delta)}\xspace}
	\newcommand{\bfT}{\ensuremath{\mathbf{T}}\xspace}
	\newcommand{\GsT}{\ensuremath{\mathsf{G}_{2}(\bfT)}\xspace}
	\newcommand{\GmT}{\ensuremath{\Gamma(\bfT)}\xspace}
	\newcommand{\Lnabla}{\ensuremath{\Lambda(\bfB, \nabla)}\xspace}
	\newcommand{\nabg}{\ensuremath{\nabla_{\mathsf{G}}(\bfT)}\xspace}
	\newcommand{\Delg}{\ensuremath{\Delta_{\mathsf{G}}(\bfT)}\xspace}
	\newcommand{\GT}{\ensuremath{\mathcal{G}_{2}(\bfT)}\xspace}
\title[On Modal Companions of Logics with Strong Negation]{On Modal Companions of Logics with Strong Negation}
\author{Dmitry~M.~Anishchenko}
	\address{Dmitry~M.~Anishchenko,
		Steklov Mathematical Institute of Russian Academy of Sciences,
		Moscow, Russia.}
	\email{d.anishchenko@g.nsu.ru}
\begin{document}

\begin{abstract}

	\Bsf is a natural Belnapian conservative extension of Lewis' modal system \Sf via strong negation. In ~\cite{OdWans2010} it was proved that the translation \Tb that naturally generalises the G\"{o}del--Tarski translation \TGT  embeds faithfully Nelson's logic \Nf into \Bsf. So it is natural to define a modal companion of a logic extending \Nf as an extension of \Bsf. In this paper we construct a representation of an \Nf-lattice similar to the representation of a Heyting algebra as an open elements algebra for a suitable topoboolean algebra. Using this algebraic result we construct a wide class of \Nf-extensions, elements of which have modal companions. In particular, all $\mathsf{N3}^\bot$-extensions have modal companions. Also we prove that there are a continuum of \Nf-extensions that have no modal companions.

\end{abstract}

\maketitle

\textbf{Keywords:} algebraic logic, strong negation, Belnapian modal logic, modal companion, twist-structure, Kleene algebra.

\section{Introduction}

Since the discovery that intuitionistic logic \Int is faithfully embedded into the normal modal logic \Sf via G\"{o}del--Tarski translation (see \cite[Theorem 3.83]{ChagrovZ} for details), the notion of modal companion is linked with extensions of \Int and normal extensions of \Sf.  Transferring different properties from a superintuitionistic logic to its modal companions and {\em vice versa} became an important branch of investigations. The picture of results obtained in this branch is well-presented in the survey \cite{ChagZach92}.

Our paper will be devoted to modal companions for extensions of Nelson's constructive logic. Originally D.~Nelson \cite{Nelson49} defined a constructive logic with stong negation, which is denoted now by $\mathsf{N3}^\bot$, as an alternative formalization of Brouwer's intuitionistic logic. Later R.~Thomason \cite{Thomason} developed a Kripke style semantics for $\mathsf{N3}^\bot$, N.~Belnap suggested his ``useful four-valued logic'' \cite{Belnap77} based on what is called now the Belnap--Dunn matrix {\sffamily BD4}, and M.~Dunn \cite{Dunn76} proved that {\sffamily BD4} provides a semantics for First-Degree Entailment {\sffamily FDE}. These facts allows to consider $\mathsf{N3}^\bot$ as well as its paraconsistent version \Nf as many-valued versions of intuitionistic logic. More exactly, Nelson's constructive logic \Nf is determined by the same class of Kripke frames as intuitionistic logic, but models of this logic are augmented with four-valued valuations. In each possible world a formula may have one of four truth values:
$\mathsf{T}$ (True), $\mathsf{F}$ (False), $\mathsf{N}$ (Neither), 
$\mathsf{B}$ (Both) of the Belnap--Dunn matrix {\sffamily BD4}; and values of conjunction, disjunction and strong negation of formulas are computed via the respective operations of {\sffamily BD4}. In case of $\mathsf{N3}^\bot$ the truth value $\mathsf{B}$ is excluded, a formula can not be true and false simultaneously in each world. 
		
The Belnapian modal logic {\sffamily BK} \cite{OdWans2010} extends the least normal modal logic {\sffamily K} via strong negation \rsim in exactly the same way as the logic \Nf extends \Int. The Kripke style semantics for {\sffamily BK} also can be obtained from that of {\sffamily K} via replacement of two-valued valuations by four-valued ones. The semantics of necessity and possibility operators is defined according to Fitting's approach \cite{Fitting1991} to many-valued modalities. An algebraic semantics for {\sffamily BK} and its extensions was developed in \cite{OdLat2012} and \cite{Speranski13}. The  structure of the lattice of logics extending {\sffamily BK} was investigated in \cite{OdSp2016}. Further, first-order versions of {\sffamily BK} were studied in \cite{GrefSp24}, and the "bilattice" version of {\sffamily BK} was considered in \cite{Speranski21}. It should be noted that only two of four truth values of the Belnap-Dunn matrix can be expressed in the language of {\sffamily BK}: $\mathsf{T}$ and $\mathsf{F}$; but enriching the language with the constants $\mathsf{N}$ and $\mathsf{B}$ leads to unexpected results (see \cite{OdSp2020} for details).

It was proved in \cite{OdWans2010} that \Nf is faithfully embedded into the logic
$\Bsf = \mathsf{BK} + \{ \Box p \rarr p, \Box p \rarr \Box \Box p \}$ via the translation \Tb, which is a natural extension of the well-known
G\"{o}del--Tarski translation \TGT \cite{Goedel1933} to the language with strong negation. Due to this result it would be natural to define a modal companion of an \Nf-extension as a \Bsf-extension and we are curious about transferring results about modal companions of superintuitionistic logics to extensions of \Nf. The efforts to study modal companions of \Nf-extensions were made in \cite{Kaushan2019} and \cite{OdinVish24}. These efforts demonstrated that finding modal companions of constructive logics meets essential complications. As a result in \cite{OdinVish24} it only was proved that the so-called special extensions of \Nf have modal companions.
		
In this paper we prove that logics from a wide class of \Nf-extensions, which includes, in particular, all $\mathsf{N3}^\bot$-extensions, have modal companions. Also we find a continuum of logics that  have no modal companions. We get the results by constructing some regular presentation of models of \Nf as subsets of models of \Bsf, which is similar to the presentation of Heyting algebras as open elements algebras of topoboolean algebras. 
		
The paper is structured as follows. In Section~2 we define the logics of interest, their algebraic semantics and the notion of modal companion. In Section~3 we investigate algebraic relations between models of \Bsf and models of \Nf. In Section~4 we distinguish a wide class of logics and prove that all logics from this class have modal companions by using algebraic results from Section~3; also we show that there are a continuum of logics without modal companions.

\section{Preliminaries}

In the present section we give definitions of logics of interest and of their algebraic semantics. We recall the classical results about modal companions of superintuitionistic logics and provide definitions of the translation \Tb used to define modal companions of \Nf-extensions.
We will assume throughout that the reader is familiar with basic notions of universal algebra. We refer to \cite{BurrSank} for details.

\subsection{Logics}

We work with the following propositional languages:
\begin{align*}
	\langui &= \{ \wedge^{2}, \vee^{2}, \rarr^{2}, \bot^{0} \},
	&
	\langus &= \langui \cup \{ \rsim^{1} \},
	\\
	\langub &= \langui \cup \{ \Box^{1} \},
	&
	\langubs &= \langui \cup \{ \rsim^{1}, \Box^{1}, \Diamond^{1} \}.
\end{align*}

We denote a countable set of propositional variables by
$\mathrm{Prop}$. Let $\Fml$ be the absolutely free algebra {(the formula-algebra)} of a language \langu built up over
$\mathrm{Prop}$. {\em Formulas} of the language \langu are elements of
$\Fml$.

A set $L \subseteq \Fml$ is called a
{\em logic in the language} \langu if it satisfies the following property:
\begin{enumerate}
	
	\item[1.] For $\langu \in \{ \langui, \langus \}$,
	$L$ is closed under the \textit{substitution} $(\mathrm{SUB})$ and the
	\textit{modus ponens} $(\mathrm{MP})$ rules:
	\[
		\mbox{(SUB)} \quad 
		\dfrac{ \varphi(p_1, \dots, p_n) }											  	  { \varphi(\psi_1, \dots, \psi_n) }
		\ \ \ \
		\mbox{(MP)} \quad
		\dfrac{ \varphi, \quad \varphi \rarr \psi }
			  {\psi}
	\]
	
	\item[2.] In case $\langu = \langub$,
	$L$ is closed under $(\mathrm{SUB})$, $(\mathrm{MP})$ and
	the \textit{monotonicity rule for $\Box$} $(\mathrm{RM}_{\Box})$:
	\[
		(\mbox{RM}_{\Box}) \quad 													\dfrac{ \varphi \rarr \psi }
			  { \Box \varphi \rarr \Box \psi }
	\]
	
	\item[3.] Finally, if $\langu = \langubs$, then
	$L$ is closed under $(\mathrm{SUB})$, $(\mathrm{MP})$,
	$(\mathrm{RM}_{\Box})$ and the \textit{monotonicity rule for
	$\Diamond$} $(\mathrm{RM}_{\Diamond})$:
	\[
		(\mbox{RM}_{\Diamond}) \quad 												\dfrac{ \varphi \rarr \psi }
			  { \Diamond \varphi \rarr \Diamond \psi }
	\]
	
\end{enumerate}

For a logic $L$ in a language \langu we will denote the set of all logics in \langu containing $L$ by $\mathcal{E}L$. $\mathcal{E}L$ is a complete lattice w.r.t. inclusion, because the intersection of an arbitrary set of logics in \langu is also a logic in \langu. So $\mathcal{E}L$ is called the {\em lattice of extensions of $L$}. Also for a set of formulas
$X \subseteq \Fml$ we will denote the least logic in \langu containing $L$ and $X$ by $L+X$.

We use the following standard abbreviations:
\begin{align*}
	\neg \varphi & := \varphi \rarr \bot
	&&\text{(intuitionistic negation);} \\
	\Diamond \varphi & := \neg \Box \neg \varphi
	&&\text{(only for formulas in \Fmlb);} \\
	\varphi \leftrightarrow \psi & :=
		(\varphi \rarr \psi) \wedge (\psi \rarr \varphi)
	&&\text{(equivalence);} \\
	\varphi \Leftrightarrow \psi & :=
		(\varphi \leftrightarrow \psi)
		\wedge
		(\rsim \varphi  \leftrightarrow \rsim \psi)
	&&\text{(strong equivalence).}
\end{align*}

Now we introduce logics of interest. The {\em intuitionistic logic} \Int is the least logic in the language \langui containing the following axioms:
\begin{enumerate}
	
	 \item[1.] $ p \rarr (q \rarr p) $

	 \item[2.] $ (p \rarr (q \rarr r)) \rarr 
	 			 ((p \rarr q) \rarr (p \rarr r)) $

	 \item[3.] $ (p \wedge q) \rarr p $

	 \item[4.] $ (p \wedge q) \rarr q $

	 \item[5.] $ p \rarr (q \rarr (p \wedge q)) $

	 \item[6.] $ p \rarr (p \vee q) $

	 \item[7.] $ q \rarr (p \vee q) $

	 \item[8.] $ (p \rarr r) \rarr 
	 		     ( (q \rarr r) \rarr 
	 					((p \vee q) \rarr r) ) $

	 \item[9.] $ \bot \rarr p $
	
\end{enumerate}
The {\em classical logic} is defined as $\Cl = \Int + \{ p \vee \neg p \}$.

{\em Constructive Nelson's logic} \Nf is the least logic in the language
\langus containing:
\begin{enumerate}[I)]
	
	\item the axioms of \Int
	
	\item the axioms of strong negation:
	\begin{enumerate}[1.]
		
		\item $ \rsim (p \vee q) \leftrightarrow
				(\rsim p \wedge \rsim q) $

		\item $ \rsim (p \wedge q) \leftrightarrow
				(\rsim p \vee \rsim q) $

		\item $ \rsim (p \rarr q) \leftrightarrow
				(p \wedge \rsim q) $

		\item $ \rsim \rsim p \leftrightarrow p $

		\item $ \rsim \bot $
		
	\end{enumerate}
	
\end{enumerate}

The modal logic \Sf is the least logic in the language \langub containing:
\begin{enumerate}[I)]
	
	\item the axioms of \Cl
	
	\item the following modal axioms:
	\begin{enumerate}[1.]
		
		\item $ \Box (p \rarr p) $

		\item $ (\Box p \wedge \Box q) \rarr
				 \Box (p \wedge q) $

		\item $ \Box p \rarr p $

		\item $ \Box p \rarr \Box \Box p $
		
	\end{enumerate}
	
\end{enumerate}
{\em Grzegorczyk's logic} is defined as
$ \Grz = \Sf + \{ \Box ( \Box (p \rarr \Box p) \rarr p) \rarr p \} $.

The modal logic \Bsf is the least logic in the language \langubs containing:
\begin{enumerate}[I)]
	
	\item the axioms of \Cl
	
	\item the axioms of strong negation
	
	\item the modal axioms of \Sf
	
	\item the interplay axioms:
	\begin{enumerate}[1.]
		
		\item $ \neg \Box p \leftrightarrow \Diamond \neg p $
		
		\item $ \neg \Diamond p \leftrightarrow \Box \neg p $
		
		\item $ \Box p \Leftrightarrow \rsim \Diamond \rsim p $
		
		\item $ \Diamond p \Leftrightarrow \rsim \Box \rsim p $
		
	\end{enumerate}
	
\end{enumerate}

It is known (see \cite{Odintsov05a}) that each $L \in \ENf$ is closed under the positive replacement rule
\[
	(\mbox{PR}) \quad
	\frac{
			\varphi_{1} \leftrightarrow \psi_{1},
			\dots,
			\varphi_{n} \leftrightarrow \psi_{n}
		 }
		 {
			\chi(\varphi_{1}, \dots, \varphi_{n})
			\leftrightarrow
			\chi(\psi_{1}, \dots, \psi_{n})
		 },
\]
where $\varphi_{j}, \psi_{j} \in \Fmls$ and $\chi \in \Fmli$. The logic \Nf is not closed under the usual replacement rule without restriction on $\chi$, but each
$L \in \ENf$ is closed under the weak replacement rule
\[
	(\mbox{WR}) \quad
	\frac{
			\varphi_{1} \Leftrightarrow \psi_{1},
			\dots,
			\varphi_{n} \Leftrightarrow \psi_{n}
		 }
		 {
			\chi(\varphi_{1}, \dots, \varphi_{n})
			\leftrightarrow
			\chi(\psi_{1}, \dots, \psi_{n})
		 },
\]
where $\varphi_{j}, \psi_{j}, \chi \in \Fmls$.

For modal logics it is easy to check that each $L \in \ESf$ or
$L \in \EBsf$ is closed under the normalisation rule
\[ (\mbox{RN}) \quad \frac{\varphi}{\Box \varphi}. \] The logic \Bsf is not closed under the usual replacement rule, similarly to \Nf, but (see \cite{OdWans2010}) each logic $L \in \EBsf$ is closed under the positive and the weak variants of this rule:
\[
	(\mbox{PR}) \quad
	\frac{
			\varphi_{1} \leftrightarrow \psi_{1},
			\dots,
			\varphi_{n} \leftrightarrow \psi_{n}
		}
		{
			\theta(\varphi_{1}, \dots, \varphi_{n})
			\leftrightarrow
			\theta(\psi_{1}, \dots, \psi_{n})
		},
\] and
\[
	(\mbox{WR}) \quad
	\frac{
			\varphi_{1} \Leftrightarrow \psi_{1},
			\dots,
			\varphi_{n} \Leftrightarrow \psi_{n}
		}
		{
			\chi(\varphi_{1}, \dots, \varphi_{n})
			\leftrightarrow
			\chi(\psi_{1}, \dots, \psi_{n})
		},
\]
where $\theta \in \Fmlb$ and $\varphi_{j}, \psi_{j}, \chi \in \Fmlbs$.

 \subsection{Algebraic semantics}

Note that we use the same abbreviations for operations $\neg, \Diamond, \leftrightarrow, \Leftrightarrow$ in algebras $\bfA = \langle A; \langu^{\bfA} \rangle $ as for formulas.

An algebra
$\bfA = \langle A; \wedge_{\bfA}, \vee_{\bfA},
\rarr_{\bfA}, \bot_{\bfA} \rangle $ is called a {\em Heyting algebra}, if
its reduct $\langle A; \wedge_{\bfA}, \vee_{\bfA} \rangle$ is a lattice
with order relation $\leql_{\bfA}$, $\bot_{\bfA}$ is the least element w.r.t. $\leql_{\bfA}$, and for arbitrary $a, b, c \in A$ the following holds:
\[
	a \wedge_{\bfA} b \leql_{\bfA} c
	\iff
	a \leql_{\bfA} b \rarr_{\bfA} c
\]
Sometimes we will omit the subscript \bfA, when it is clear which algebra we work in.

Note that each Heyting algebra \bfA is a distributive lattice, it has the greatest element $1_{\bfA}$, and for all $a \in A$ we have
$a \rarr a = 1_{\bfA}$ (see, e.g., \cite{RasSikor72}). Also the following hold for all
$a, b \in A$:
\begin{enumerate}[(1)]
	
	\item $a \rarr b = 1_{\bfA} \iff a \leql_{\bfA} b$
	
	\item $a \wedge (a \rarr b) = a \wedge b$
	
	\item $a \leql_{\bfA} b
			\Longrightarrow
			\neg b \leql_{\bfA} \neg a$
	
	\item $a \leql_{\bfA} \neg \neg a$
	
\end{enumerate}

A non-empty set $F \subseteq A$ is called a {\em filter of} \bfA if it satisfies the following properties:
\[
	(1)\ b \in F,\ b \leql a \Rightarrow a \in F;
	\qquad
	(2)\ a, b \in F \Rightarrow a \wedge b \in F.
\]
By analogy, a non-empty set $I \subseteq A$ is called an {\em ideal of} \bfA if the dual properties hold:
\[
	(1)\ b \in I,\ a \leql b \Rightarrow a \in I;
	\qquad
	(2)\ a, b \in I \Rightarrow a \vee b \in I.
\]
The set of all filters (ideals) of \bfA is denoted by
$\mathcal{F}(\mathbf{A})$ ($\IA$).

An element $a \in A$ is called {\em dense} if one of the following three equivalent properties holds:
\[
(1)\ \neg a = \bot ;\quad
(2)\ \neg \neg a = 1 ;\quad
(3)\ a = b \vee \neg b \text{\,\, for some $b \in A$}.
\]
It is well-known that the set of all dense elements of \bfA is a filter, and we denote it by $F_{d}(\bfA)$. We denote the set of all filters which contain $F_{d}(\bfA)$ as a subset by \FdA.

A Heyting algebra \bfA is called a {\em Boolean algebra} if
$ a \vee \neg a = 1 $ for all $a \in A$ or, equivalently, if
$F_{d}(\bfA) = \{ 1 \}$.

An algebra
$\bfB = \langle B; \wedge_{\bfB}, \vee_{\bfB},
\rarr_{\bfB}, \bot_{\bfB}, \Box_{\bfB} \rangle $ is called a {\em topological Boolean algebra} or a \TBA if its reduct
$\langle B; \wedge_{\bfB}, \vee_{\bfB}, \rarr_{\bfB}, \bot_{\bfB} \rangle$ is a Boolean algebra and the following hold for all $a, b \in B$ (some subscripts are omitted):
\begin{gather*}
		\Box 1_{\bfB} = 1_{\bfB},
	\qquad
		\Box (a \wedge b) = \Box a \wedge \Box b,
	\\
		\Box a \leql_{\bfB} a,
	\qquad
		\Box a \leql_{\bfB} \Box \Box a.
\end{gather*}
Note that in an arbitrary \TBA \bfB the following hold for all $a, b \in B$
(see, e.g., \cite{RasSikor72}):
\begin{enumerate}[(1)]
	
	\item $\Diamond \bot_{\bfB} = \bot_{\bfB}, \
			\Box \bot_{\bfB} = \bot_{\bfB},\
			\Diamond 1_{\bfB} = 1_{\bfB}$
	
	\item $\Diamond (a \vee b) = \Diamond a \vee \Diamond b$
	
	\item $a \leql_{\bfB} \Diamond a$
	
	\item $\Diamond \Diamond a \leql_{\bfB} \Diamond a$
	
	\item $a \leql_{\bfB} b
			\Longrightarrow(
				\Box a \leql_{\bfB} \Box b
				\text{ and }
				\Diamond a \leql_{\bfB} \Diamond b
			)$
	
	\item $\Box \Box a = \Box a
			\text{ and }
			\Diamond \Diamond a = \Diamond a$
	
	\item $\Box a \vee \Box b \leql_{\bfB} \Box ( a \vee b )
			\text{ and }
			\Diamond (a \wedge b) \leql_{\bfB}
				\Diamond a \wedge \Diamond b$
	
\end{enumerate}
A {\em filter (ideal) of} \TBA is defined as for Heyting algebras. We call a filter (ideal) of \bfB\ {\em open (closed)} if it is closed under
$\Box_{\bfB}$ ($\Diamond_{\bfB}$). The set of all open filters (closed ideals) of \bfB is denoted by
$\FbB$ ($\IdiB$).

Let \bfB be a \TBA. The set $\GsB := \{ a \in B \mid \Box a  = a \}$ of all
{\em open elements} contains $\bot_{\bfB}$ and is closed under
$\vee, \wedge$. The algebra
\[
	\GB = \langle \GsB;
	\vee_{\GB}, \wedge_{\GB}, \rarr_{\GB}, \bot_{\GB} \rangle
\]
where
\begin{align*}
	a \vee_{\GB} b & := a \vee b
	&
	a \wedge_{\GB} b & := a \wedge b \\
	a \rarr_{\GB} b & := \Box (a \rarr b)
	&
	\bot_{\GB} & := \bot_{\bfB}
\end{align*}
is called the {\em algebra of open elements of} \bfB.
It is known that \GB is a Heyting algebra (see, e.g., \cite[pp.~61--62]{GabMaks05}).

For a \TBA \bfB we denote its subalgebra generated by all open elements by
$\bfB^{s}$. It is well-known that for each Heyting algebra
\bfA there is one up to isomorphism \TBA $\mathbf{C}$ such that
$\mathcal{G}(\mathbf{C}) = \bfA$ and
$\mathbf{C}^{s} = \mathbf{C}$ (see, e.g., \cite[Theorem\,3.8]{GabMaks05}). We denote this \TBA by $s(\bfA)$.

Let $\bfA = \langle A, \vee, \wedge, \rarr, \bot \rangle $ be a Heyting algebra. The algebra
$\bfA^{\bowtie} = \langle A \times A;
\vee_{\bfA^{\bowtie}}, \wedge_{\bfA^{\bowtie}}, \rarr_{\bfA^{\bowtie}}, \bot_{\bfA^{\bowtie}}, \rsim_{\bfA^{\bowtie}} \rangle$ is called the
{\em full twist-structure over a Heyting algebra} \bfA, where the operations of $\bfA^{\bowtie}$ are defined as follows:
\begin{align*}
	(a,b) \vee_{\bfA^{\bowtie}} (c,d) &= (a \vee c, b \wedge d) &
	\rsim_{\bfA^{\bowtie}} (a,b) & = (b,a) \\	
	(a,b) \wedge_{\bfA^{\bowtie}} (c,d) &= (a \wedge c, b \vee d) &
	\bot_{\bfA^{\bowtie}} & = (\bot, 1) \\
	(a,b) \rarr_{\bfA^{\bowtie}} (c,d) &=  ( a \rarr c, a \wedge d)
\end{align*}
If we have a \TBA $\bfB = \langle B, \vee, \wedge,
\rarr, \bot, \Box \rangle $, then the algebra
$\bfB^{\bowtie} = \langle B \times B;
\vee_{\bfB^{\bowtie}}, \wedge_{\bfB^{\bowtie}}, \rarr_{\bfB^{\bowtie}}, \bot_{\bfB^{\bowtie}}, \rsim_{\bfB^{\bowtie}},
\Box_{\bfB^{\bowtie}}, \Diamond_{\bfB^{\bowtie}} \rangle$ is called
the {\em full twist-structure over a TBA} \bfB, where the operations
of \langus are defined as for the full twist-structure over a Heyting algebra, and the modal operations are defined as:
\begin{align*}
	\Box_{\bfB^{\bowtie}} (a, b) & =  (\Box a,  \Diamond  b)
	\quad & \Diamond_{\bfB^{\bowtie}}  (a, b) & =  ( \Diamond  a, \Box b)
\end{align*}
Finally, if we have a Heyting algebra (a \TBA) $\mathbf{C}$, then a subalgebra $\clB$ of $\mathbf{C}^{\bowtie}$ is called a
{\em twist-structure over} $\mathbf{C}$ if $\pi_{1}(\clB) = C$, where
$\pi_{1}$ denotes the first projection function. Also we denote the second projection function by $\pi_{2}$ in what follows.

We define the following sets for a twist-structure \clB over a Heyting algebra (a \TBA) $\mathbf{C}$:
\[
	\nabla(\clB) := \{ a \vee b \mid (a, b) \in \clB \},
	\quad
	\Delta(\clB) := \{ a \wedge b \mid (a, b) \in \clB \}.
\]
It is known (see \cite{ConstrNeg, OdLat2012}) that
$\nabla(\clB) \in \mathcal{F}_{d}(\mathbf{C})$
($\mathcal{F}_{\Box}(\mathbf{C})$) and
$\Delta(\clB) \in \mathcal{I}(\mathbf{C})$
($\mathcal{I}_{\Diamond}(\mathbf{C})$).
Conversely, for every
$\nabla \in \mathcal{F}_{d}(\mathbf{C})$
($\mathcal{F}_{\Box}(\mathbf{C})$) and
$\Delta \in \mathcal{I}(\mathbf{C})$
($\mathcal{I}_{\Diamond}(\mathbf{C})$) we can define the set
\[
	X = \{ (a, b) \in C \times C \mid
		a \vee b \in \nabla, a \wedge b \in \Delta \},
\]
and it is known that $X$ is closed under the operations of
$\mathbf{C}^{\bowtie}$ and $\pi_{1}(X) = C$. So we can define the twist-structure over $\mathbf{C}$ with $X$ as the domain, and we denote it by $Tw(\mathbf{C}, \nabla, \Delta)$. Moreover, it is known that
\[
	\nabla(Tw(\mathbf{C}, \nabla, \Delta)) = \nabla,
	\quad
	\Delta(Tw(\mathbf{C}, \nabla, \Delta)) = \Delta
\]
and
\[
	Tw(\mathbf{C}, \nabla(\clB), \Delta(\clB)) = \clB,
\]
so twist-structures are uniquely determined by these sets, which we call the
{\em invariants of twist-structures}. The sets $\nabla(\clB)$ and $\Delta(\clB)$ are called the {\em filter of \clB} and the {\em ideal of \clB} respectively.

Now, let
$\langu \in
	\{ \langui, \langub, \langus, \langubs \}$,
$\Gamma \cup \{ \varphi \} \subseteq \Fml$, and let
$\clA\ (\mathcal{K})$ be:
\begin{enumerate}[1.]
	
	\item a Heyting algebra (some class of Heyting algebras), if $\langu = \langui$;
	
	\item a \TBA (some class of {\TBA}s), if $\langu = \langub$;
	
	\item a twist-structure over a Heyting algebra \bfA (some class of twist-struc\-tures over Heyting algebras), if $\langu = \langus$;
	
	\item a twist-structure over a \TBA \bfA (some class of twist-structures over {\TBA}s), if $\langu = \langubs$.
	
\end{enumerate}
Also let us denote the class of all Heyting algebras (the class of all twist-structures over Heyting algebras) by $\mathcal{HA}$ ($\mathcal{HA}^{\bowtie}$) and the class of all {\TBA}s (the class of all twist-structures over {\TBA}s) by $\mathcal{TBA}$ ($\mathcal{TBA}^{\bowtie}$). Recall that for an algebra
$\clA = \langle A; \langu^{\clA} \rangle$ an {\em $\clA$-valuation} is a homomorphism
$v \colon \Fml \rarr \clA$.

Let's define the validity of formulas (sets of formulas) in algebras (classes of algebras):

\begin{definition}\hfill
	
	\begin{enumerate}[1.]
		
		\item If $\langu \in \{ \langui, \langub \}$,\ \
		$
			\clA \models \varphi \text{\Iff}
			v(\varphi) = 1_{\clA} \text{ for every \clA-valuation $v$}
		$
		
		\item If $\langu \in \{ \langus, \langubs \}$,
		\ \
		$
			\clA \models \varphi \text{\Iff}
			\pi_{1}v(\varphi) = 1_{\bfA} \text{ for every
			$\mathcal{A}$-valuation $v$}
		$
		
		\item $\clA \models \Gamma \text{\Iff} 
		\clA \models \varphi \text{ for every $\varphi \in \Gamma$}$
		
		\item $\mathrm{L} \clA :=
		\{ \varphi \in \Fml \mid \clA \models \varphi \}$	
		
		\item $\mathcal{K} \models \varphi \text{\Iff}
		\clA \models \varphi \text{ for every $\clA \in \mathcal{K}$}$
		
		\item $\mathcal{K} \models \Gamma \text{\Iff}
		\mathcal{K} \models \varphi \text{ for every $\varphi \in \Gamma$}$
		
		\item $\mathrm{L} \mathcal{K} := \bigcap
			\{ \mathrm{L} \clA \mid \clA \in \mathcal{K} \} $
		
	\end{enumerate}
	
\end{definition}

\begin{remark}\label{rem1}
	
	Note that if  we have some twist-structure \clA over a Heyting algebra (a \TBA) and some \clA-valuation $v$, then
$\pi_{1}(v(\psi(p_{1}, \dots, p_{n}))) =
\psi(\pi_{1}(v(p_{1})), \dots, \pi_{1}(v(p_{n})))$ for every
$\psi \in \Fmli$ ($\psi \in \Fmlb$), because the operations of twist-structure are agreed with the operations of Heyting algebra (\TBA) on the first component.
	
\end{remark}

The following theorem links logics and algebras:
\begin{theorem}\hfill
	
	\begin{enumerate}[1.]
		
		\item $\mathrm{L} \mathcal{HA} = \Int$ and
		$\mathrm{L} \clA \; (\mathrm{L} \mathcal{K})
		\in \EInt$, if $\langu = \langui$ (see, e.g.,
		\cite{RasSikor72}).
		
		\item $\mathrm{L} \mathcal{TBA} = \Sf$ and
		$\mathrm{L} \clA \; (\mathrm{L} \mathcal{K})
		\in \ESf$, if $\langu = \langub$ (see, e.g.,
		\cite{ChagrovZ}).
		
		\item $\mathrm{L} \mathcal{HA}^{\bowtie} = \Nf$ and
		$\mathrm{L} \clA \; (\mathrm{L} \mathcal{K})
		\in \ENf$, if $\langu = \langus$ (see \cite{ConstrNeg}).
		
		\item $\mathrm{L} \mathcal{TBA}^{\bowtie} = \Bsf$ and
		$\mathrm{L} \clA \; (\mathrm{L} \mathcal{K})
		\in \EBsf$, if $\langu = \langubs$ (see \cite{OdWans2010}).
		
	\end{enumerate}
	
\end{theorem}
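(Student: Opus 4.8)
The plan is to read items~(1)--(2) off the classical algebraic completeness theorems, obtain items~(3)--(4) from the twist-structure completeness results cited in the statement, and in all four cases add only the routine verification that each validity set $\mathrm{L}\clA$ is closed under the rules defining its ambient class of logics. For item~(1), $\mathrm{L}\mathcal{HA}=\Int$ is the Lindenbaum--Tarski completeness of \Int with respect to Heyting algebras (soundness: every Heyting algebra validates the nine axioms; completeness: the Lindenbaum algebra of any $L\in\EInt$ refutes every $\varphi\notin L$), see \cite{RasSikor72}; for item~(2), $\mathrm{L}\mathcal{TBA}=\Sf$ is the analogous McKinsey--Tarski result, see \cite{ChagrovZ}.

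For items~(3)--(4) the identities $\mathrm{L}\mathcal{HA}^{\bowtie}=\Nf$ and $\mathrm{L}\mathcal{TBA}^{\bowtie}=\Bsf$ are exactly what is cited (from \cite{ConstrNeg} and \cite{OdWans2010} respectively); the one point I would make explicit is that validity in the asymmetric sense defined above passes to subalgebras. If $\clA$ is a twist-structure over a Heyting algebra $\bfA$ (resp.\ a \TBA $\bfB$), then the inclusion $\iota\colon\clA\hookrightarrow\bfA^{\bowtie}$ is a homomorphism, so for every $\clA$-valuation $v$ the composite $\iota\circ v$ is an $\bfA^{\bowtie}$-valuation with $\pi_{1}(v(\varphi))=\pi_{1}(\iota(v(\varphi)))$; hence $\bfA^{\bowtie}\models\varphi$ forces $\clA\models\varphi$. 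Since $\bfA^{\bowtie}$ is itself a twist-structure over $\bfA$, this both confirms that every member of $\mathcal{HA}^{\bowtie}$ ($\mathcal{TBA}^{\bowtie}$) validates \Nf ($\Bsf$) and yields $\Nf\subseteq\mathrm{L}\clA$ (resp.\ $\Bsf\subseteq\mathrm{L}\clA$) for each individual twist-structure.

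It then remains to check closure of each $\mathrm{L}\clA$ under the rules attached to its language; the corresponding statements for $\mathrm{L}\mathcal{K}=\bigcap\{\mathrm{L}\clA\mid\clA\in\mathcal{K}\}$ follow immediately, as an intersection of logics all containing a fixed logic is again such a logic. Closure under $(\mathrm{SUB})$ is the standard identification of substitution with precomposition of valuations: $v(\varphi(\psi_{1},\dots,\psi_{n}))=v'(\varphi(p_{1},\dots,p_{n}))$ for $v'$ the valuation with $v'(p_{i})=v(\psi_{i})$. For $(\mathrm{MP})$ in the non-twist cases one uses the Heyting identity $a\wedge(a\rarr b)=a\wedge b$: if $v(\varphi)=1$ and $v(\varphi\rarr\psi)=1$ then $1=v(\varphi)\wedge v(\varphi\rarr\psi)=v(\varphi)\wedge v(\psi)$, so $v(\psi)=1$. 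For $(\mathrm{MP})$ in the twist cases the relevant identity is $1\rarr c=c$: writing $v(\varphi)=(a,b)$ and $v(\psi)=(c,d)$, the hypotheses $\pi_{1}(v(\varphi))=1$ and $\pi_{1}(v(\varphi\rarr\psi))=a\rarr c=1$ give $c=1\rarr c=1$. For $(\mathrm{RM}_{\Box})$ and $(\mathrm{RM}_{\Diamond})$ one uses monotonicity of $\Box$ and $\Diamond$: $\pi_{1}(v(\varphi\rarr\psi))=a\rarr c=1$ means $a\leq c$, whence $\Box a\leq\Box c$ and $\Diamond a\leq\Diamond c$, and since $\Box_{\bfB^{\bowtie}}$ and $\Diamond_{\bfB^{\bowtie}}$ act on the first coordinate exactly as $\Box$ and $\Diamond$ (Remark~\ref{rem1}), this gives $\pi_{1}(v(\Box\varphi\rarr\Box\psi))=\Box a\rarr\Box c=1$ and likewise for $\Diamond$; the non-twist \TBA case is the same computation without the wrapper. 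With the base-logic containments from the previous paragraph, this puts each $\mathrm{L}\clA$ in the appropriate extension lattice.

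I do not expect a genuine obstacle here: the mathematical content is the four cited completeness theorems, and the rest is bookkeeping. The only step requiring care is the asymmetric validity notion for twist-structures in items~(3)--(4) --- every argument must be run through the first projection, using consistently that the twist operations restrict to the Heyting (\TBA) operations on the first coordinate --- but each verification collapses to a one-line identity.
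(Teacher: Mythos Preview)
The paper gives no proof of this theorem at all: it is stated as a collection of known results, with each item carrying its own citation, and the text moves on immediately. Your proposal is a correct and reasonable unpacking of what those citations deliver together with the routine closure-under-rules verifications; there is nothing to compare it against in the paper itself.
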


We call an algebra \clA a {\em model of logic
$L$ in the language \langu} if $\clA \models L$. A model \clA of $L$ is called {\em proper} if $\mathrm{L} \clA = L$.

For each language $\langu \in \{ \langus, \langubs \}$ and logic
$L \in \ENf$ or $L \in \EBsf$ in the language \langu we define a special twist-structure which is called the {\em Lindenbaum twist-structure of} $L$.
First, we define a binary relation $\equiv_{L}$ on \Fml: 
\[
	\varphi \equiv_{L} \psi \iff \varphi \leftrightarrow \psi \in L.
\]
It is obvious that $\equiv_{L}$ is an equivalence relation. Denote the equivalence class of $\varphi$ w.r.t. $\equiv_{L}$ by $[\varphi]_{L}$. We define the following operations $\vee_{\TL}$, $\wedge_{\TL}$, $\rarr_{\TL}$ and $\bot_{\TL}$
(also $\Box_{\TL}$ and $\Diamond_{\TL}$) on the equivalence classes:
\begin{align*}
	[\varphi]_{L} \vee_{\TL} [\psi]_{L} & := [\varphi \vee \psi]_{L} 
	&
	[\varphi]_{L} \wedge_{\TL} [\psi]_{L} & := [\varphi \wedge \psi]_{L}
	\\
	[\varphi]_{L} \rarr_{\TL} [\psi]_{L} & := [\varphi \rarr \psi]_{L}
	&
	\bot_{\TL} & := [\bot]_{L}
	\\
	(\Box_{\TL} [\varphi]_{L} & := [\Box \varphi]_{L})
	&
	(\Diamond_{\TL} [\varphi]{L} & := [\Diamond \varphi]_{L})
\end{align*}
These operations are well-defined because $L$ is closed under \mbox{(PR)}. Also the definition of $\Diamond_{\TL}$ is correct w.r.t.
$\Diamond \varphi := \neg \Box \neg \varphi$ because of the interplay axioms 1--2.

$\Int \subseteq \Nf$ ($\Sf \subseteq \Bsf$) implies that the algebra
\[
	\TL := \left\langle
		\{ [\varphi]_{L} \mid \varphi \in \Fml \}; \vee_{\TL}, \wedge_{\TL},
		\rarr_{\TL}, \bot_{\TL} (, \Box_{\TL})
	\right\rangle,
\] 
is a Heyting algebra (a \TBA).
And in that case
\[ 
	1_{\TL} = [\varphi]_{L} \rarr [\varphi]_{L} =
	[\varphi \rarr \varphi]_{L} = L\,.
\]

It is known that the following subset of 
$\TL^{2}$ is closed under the operations of $\TL^{\bowtie}$ (see \cite{ConstrNeg, OdWans2010}):
\[
	W_{L} := \{ ([\varphi]_{L}, [\rsim \varphi]_{L}) \mid
			\varphi \in \Fml \}.
\]
It is obvious that
$\pi_{1}(W_{L})
= \{ [\varphi]_{L} \mid \varphi \in \Fml \}$,
so the {\em Lindenbaum twist-structure} \WL is the twist-structure over \TL with $W_{L}$ as the domain. We denote its invariants by
\begin{align*}
	\nablaL & := \nabla(\WL) =
		\{ [\varphi \vee \rsim \varphi]_{L} \mid 
			\varphi \in \Fml \},
	\\
	\DeltaL & := \Delta(\WL) =
		\{ [\varphi \wedge \rsim \varphi]_{L} \mid 
			\varphi \in \Fml \}.
\end{align*}
It is known that \WL is a proper model of $L$.

\subsection{The translations and modal companions}

Let's define the G\"{o}del--Tarski translation
$\TGT \colon \Fmli \rarr \Fmlb$ by induction of the construction of a formula:
\begin{align*}
	\TGT(p) & := \Box p,
	&
	\TGT(\bot) & := \bot,
	\\
	\TGT(\varphi \wedge \psi) & := \TGT(\varphi) \wedge \TGT(\psi),
	&
	\TGT(\varphi \vee \psi) & := \TGT(\varphi) \vee \TGT(\psi),
	\\
	\TGT(\varphi \rarr \psi) & := \Box (\TGT(\varphi) \rarr \TGT(\psi)),
\end{align*}
where $p \in \mathrm{Prop}$.
Notice that the definition of operations in the algebra of open elements reproduces the definition of the translation \TGT for connectives
$\vee$, $\wedge$, $\rarr$, $\bot$. Thus, we have for every
$\varphi \in \Fmli$ (see, e.g., \cite[Lemma\,3.10]{GabMaks05})
\begin{equation}
	\GB \models \varphi
	\iff
	\bfB \models \TGT \varphi . \tag{Top}\label{Top}
\end{equation}

Now, let $L \in \EInt$. A logic $M \in \ESf$ is called a {\em modal companion of} $L$, if \TGT faithfully embeds $L$ into $M$: for every $\varphi \in \Fmli$
\[
	\varphi \in L \iff \TGT \varphi \in M.
\]

It is well-known that for each $L \in \EInt$, the \Sf-extensions
$\tau L := \Sf + \{ \TGT \varphi \mid \varphi \in L \}$ and
$\sigma L := \tau L + \Grz$ are the least (see \cite{DummettLemmon59}) and the greatest (see \cite{Blok76, Esakia76}) modal companions  of $L$ respectively.

We define the translation
$\Tb \colon \Fmls \rarr \Fmlbs$:{ \small
\begin{align*}
	\Tb(p) & := \Box p,
	&
	\Tb(\rsim p) & := \Box \rsim p,
	\\
	\Tb(\bot) & := \bot,
	&
	\Tb(\rsim \bot) & := \rsim \bot,
	\\
	\Tb(\varphi \wedge \psi) & := \Tb(\varphi) \wedge \Tb(\psi),
	&
	\Tb(\rsim (\varphi \wedge \psi)) & :=
				\Tb(\rsim \varphi) \vee \Tb(\rsim \psi),
	\\
	\Tb(\varphi \vee \psi) & := \Tb(\varphi) \vee \Tb(\psi),
	&
	\Tb(\rsim (\varphi \vee \psi)) & :=
				\Tb(\rsim \varphi) \wedge \Tb(\rsim \psi),
	\\
	\Tb(\varphi \rarr \psi) & := \Box (\Tb(\varphi) \rarr \Tb(\psi)),
	&
	\Tb(\rsim (\varphi \rarr \psi)) & :=
				\Tb(\varphi) \wedge \Tb(\rsim \psi).
\end{align*}}
It is easy to see that the restriction of \Tb to \Fmli coincides with \TGT. Moreover, \Tb moves strong negation through positive connectives and removes even iterations of strong negation, so \rsim occurs in $\Tb \varphi$ only in front of propositional variables and
$\bot$.

In \cite{OdWans2010} it was proved that \Tb faithfully embeds \Nf into \Bsf, so it is natural to define a {\em modal companion of} $L \in \ENf$ as a \Bsf-extension, in which $L$ is faithfully embedded by \Tb.

By analogy with \Int-extensions, for $L \in \ENf$ we can define the candidate
$\taub L := \Bsf + \{ \Tb \varphi \mid \varphi \in L \}$
to be the least modal companion of $L$. And it is not hard to see that if $L$ has some modal companion then $\taub L$ is the least modal companion of $L$.

\section{Preliminary algebraic observations}

The main goal of the present work, which gives motivation to the following algebraic results, is to explore which \Nf-extensions have modal companions.

Recall the idea how to get the results on modal companions of superintuitionistic logics:
\begin{enumerate}[1.]
	
	\item For a Heyting algebra \bfA
	take a \TBA \bfB such that $\GB = \bfA$.
	
	\item In view of \eqref{Top} we have that $\mathrm{L} \bfB$
	is a modal companion of $\mathrm{L} \bfA$. 
	
	\item Now just take \bfA to be some proper model of $L \in \EInt$.

\end{enumerate}

In the present section we provide a representaion of twist-structures over Heyting algebras as subsets of twist-structures over {\TBA}s defined in some regular way, for which we have the property of preserving the truth similar to \eqref{Top} but relative to the translation \Tb.

\subsection{From \Bsf-twist-structures to \Nf-twist-structures}

In this subsection we construct a twist-structure over a Heyting algebra starting from a twist-structure over a \TBA. Let's fix an arbitrary \TBA
$\bfB = \langle B; \vee, \wedge, \rarr, \bot, \Box \rangle$ with the greatest element $1$, $\nabla \in \FbB$ and $\Delta \in \IdiB$. We will consider the twist-structure
$\bfT := \TwBND$ throughout this subsection.

We will search for a twist-structure over the algebra of open elements \GB or its subalgebra. This motivates us to consider the following set:
\[
	\GsT : = \{ (a, b) \in \bfT \mid \Box a = a, \Box b = b \}.
\]
We define then $\GmT : = \pi_1 (\GsT)$. Also we consider a set
\[
	\Lnabla := \{ a \in \GsB \mid a \vee \Box \neg a \in \nabla\},
\]
the sense of which will be clear later.

\begin{lemma}\label{l311} The following hold:
	
	\begin{enumerate}[1.]
		
		\item $a \wedge \Box \neg a = \bot$ for all $a\in B$, so
		$a \wedge \Box \neg a \in \Delta$;
		
		\item\label{l3112} $\GmT \subseteq \GsB$;
		
		\item\label{l3113} \GsT is closed under
		$ \vee_{\GB^{\bowtie}},
		\wedge_{\GB^{\bowtie}}, \rsim_{\GB^{\bowtie}} $,
		and
		$\bot_{\GB^{\bowtie}} \in \GsT$;
		
		\item\label{l3114} $\pi_2 (\GsT) = \GmT $;
		
		\item \GmT is closed under $ \vee_{\GB}, \wedge_{\GB} $
		and $ \bot_{\GB} \in \GmT $;
		
		\item\label{l3116} $ \Lnabla \subseteq \GmT $;
		
		\item\label{l3117} \Lnabla is a subalgebra of \GB.
		
	\end{enumerate}
	
\end{lemma}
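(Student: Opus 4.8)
The plan is to settle the seven items essentially in the stated order, leaning repeatedly on two elementary facts. First, the open elements of any \TBA form a sublattice containing $\bot$ and $1$, and $\Box x$ is open for every $x$. Second, on pairs the twist operations $\vee_{\bfB^{\bowtie}}$, $\wedge_{\bfB^{\bowtie}}$, $\rsim_{\bfB^{\bowtie}}$ and the constant $\bot_{\bfB^{\bowtie}}$ coincide with $\vee_{\GB^{\bowtie}}$, $\wedge_{\GB^{\bowtie}}$, $\rsim_{\GB^{\bowtie}}$, $\bot_{\GB^{\bowtie}}$; implication is the only operation on which the two twist-structures differ, since $(a,b)\rarr_{\GB^{\bowtie}}(c,d)=(\Box(a\rarr c),a\wedge d)$ involves $\Box$. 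As \bfT is a subalgebra of $\bfB^{\bowtie}$, the second fact immediately makes \bfT closed under the ``$\GB$-read'' versions of those three operations as well.

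For part 1, $\Box\neg a\leql\neg a$ gives $a\wedge\Box\neg a\leql a\wedge\neg a=\bot$ in the Boolean reduct, and $\bot\in\Delta$. Part 2 is immediate, since membership in \GsT forces both coordinates to be open. For part 3, given $(a,b),(c,d)\in\GsT$, the pairs $(a\vee c,b\wedge d)$, $(a\wedge c,b\vee d)$, $(b,a)$ and $(\bot,1)$ stay in \bfT by the observation above and have open coordinates by the first fact, so they lie in \GsT; implication is deliberately absent because the pair $(\Box(a\rarr c),a\wedge d)$ need not lie in \bfT. For part 4, ``$\supseteq$'' is the definition of $\GmT=\pi_1(\GsT)$, while ``$\subseteq$'' follows from closure under $\rsim_{\GB^{\bowtie}}$ in part 3 (if $(a,b)\in\GsT$ then $(b,a)\in\GsT$). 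Part 5 is part 3 projected along $\pi_1$. Part 6 uses part 1: given $a\in\Lnabla$, the element $\Box\neg a$ is open, $a\vee\Box\neg a\in\nabla$ by the definition of \Lnabla, and $a\wedge\Box\neg a=\bot\in\Delta$ by part 1, so $(a,\Box\neg a)\in\GsT$ and hence $a\in\GmT$.

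The substantive part is part 7: closure of \Lnabla under $\vee_{\GB}$, $\wedge_{\GB}$, $\rarr_{\GB}$, plus $\bot_{\GB}\in\Lnabla$ (openness of all outputs being clear). Membership of $\bot_{\GB}$ amounts to $\bot\vee\Box\neg\bot=\bot\vee\Box 1=1\in\nabla$. For the three binary operations the common device is: from $a,c\in\Lnabla$ and the fact that $\nabla$ is a filter we get $(a\vee\Box\neg a)\wedge(c\vee\Box\neg c)\in\nabla$; expanding this meet over the distributive Boolean lattice yields the join of $a\wedge c$, $a\wedge\Box\neg c$, $\Box\neg a\wedge c$ and $\Box\neg a\wedge\Box\neg c$, and since $\nabla$ is upward closed it suffices to bound each of these four terms below the relevant target. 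For $\vee$ the target is $(a\vee c)\vee\Box\neg(a\vee c)=(a\vee c)\vee(\Box\neg a\wedge\Box\neg c)$ (using $\neg(a\vee c)=\neg a\wedge\neg c$ and $\Box(x\wedge y)=\Box x\wedge\Box y$); for $\wedge$ it is $(a\wedge c)\vee\Box(\neg a\vee\neg c)$, where monotonicity of $\Box$ gives $\Box\neg a,\Box\neg c\leql\Box(\neg a\vee\neg c)$; in both cases each of the four terms visibly lies below the target. The delicate case is $\rarr$, with target $p\vee\Box\neg p$ where $p:=\Box(a\rarr c)=a\rarr_{\GB}c$: here one first establishes $\Box\neg a\vee c\leql p$ (since $\Box\neg a\vee c$ is open and $\leql\neg a\vee c=a\rarr c$) and $a\wedge\Box\neg c\leql\Box\neg p$ (since $a\wedge\Box\neg c$ is open and $\leql a\wedge\neg c=\neg(a\rarr c)\leql\Diamond\neg(a\rarr c)=\neg p$), after which all four distributed terms again land below $p\vee\Box\neg p$. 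I expect this implication case — pinning down those two auxiliary inequalities, and the $\Box$/$\Diamond$ bookkeeping that puts the open witnesses on the correct side — to be the only genuine obstacle; the rest is routine computation with Heyting, Boolean and \TBA identities.
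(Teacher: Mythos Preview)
Your proposal is correct and follows essentially the same route as the paper: items 1--6 are handled identically (parts 1--3 declared obvious, part 4 from closure under $\rsim$, part 5 from part 3, part 6 via the witness $(a,\Box\neg a)$), and for part 7 both arguments reduce each closure condition to the single lower bound $(a\vee\Box\neg a)\wedge(c\vee\Box\neg c)\in\nabla$. The only cosmetic difference is the direction of the computation in part 7 --- the paper simplifies the target and bounds it from below, while you expand the meet into four joinands and bound each from above --- but for $\rarr_{\GB}$ both versions hinge on exactly your two auxiliary inequalities $\Box\neg a\vee c\leql\Box(a\rarr c)$ and $a\wedge\Box\neg c\leql\Box\neg\Box(a\rarr c)$.
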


\begin{proof}
	
	Items 1, 2, 3 are obvious. Item 5 is a consequence of item 3. Item 4
	holds because \GsT is closed under $\rsim_{\GB^{\bowtie}}$.
	
	6. If $x \in \Lnabla$, then
	$x \vee \Box \neg x \in \nabla$ and $\Box x = x$. In view of (1)
	$x \wedge \Box \neg x \in \Delta$. So
	$(x, \Box \neg x) \in \bfT $ by definition of \TwBND. And
	$x \in \GmT$.
	
	7. Suppose $a, b \in \Lnabla$. So, obviously,
	\[
		a \vee_{\GB} b,\  a \wedge_{\GB} b,\  a \rarr_{\GB} b,\  \bot_{\GB}
		\in \GsB.
	\]
	
	For $\vee_{\GB}$ we have
	\begin{multline*}
		(a \vee_{\GB} b) \vee \Box \neg (a \vee_{\GB} b) =
		(a \vee b) \vee \Box \neg (a \vee b) = \\
		= (a \vee b) \vee \Box( \neg a \wedge \neg b ) =
		(a \vee b) \vee (\Box( \neg a) \wedge \Box( \neg b)) =\\
		= (a \vee \Box (\neg a) \vee b) \wedge (b \vee \Box (\neg b) \vee a) 
		\geqslant
		(a \vee \Box \neg a) \wedge (b \vee \Box \neg b) \in \nabla.
	\end{multline*}
	
	The case of $\wedge_{\GB}$ is considered as follows:
	\begin{multline*}
		(a \wedge_{\GB} b) \vee \Box \neg (a \wedge_{\GB} b) =
		(a \wedge b) \vee \Box \neg (a \wedge b) = \\
		= (a \wedge b) \vee \Box( \neg a \vee \neg b )
		\geqslant
		(a \wedge b) \vee (\Box( \neg a) \vee \Box( \neg b)) =\\
		= (a \vee \Box (\neg a) \vee \Box(\neg b)) \wedge (b \vee \Box (\neg b) \vee \Box(\neg a))
		\geqslant
		(a \vee \Box \neg a) \wedge (b \vee \Box \neg b) \in \nabla.
	\end{multline*}
	
	Finally, consider the implication:
	\begin{multline*}
		(a \rarr_{\GB} b) \vee \Box \neg (a \rarr_{\GB} b) =
		\Box (a \rarr b) \vee \Box \neg \Box (a \rarr b) = \\
		= \Box (\neg a \vee b) \vee \Box \Diamond \neg (\neg a \vee b) =
		\Box (\neg a \vee b) \vee \Box \Diamond (a \wedge \neg b)
		\geqslant \\
		\geqslant
		(\Box (\neg a) \vee \Box b) \vee \Box(a \wedge \neg b) =
		\Box (\neg a) \vee b \vee (a \wedge \Box \neg b) =\\
		= (a \vee \Box (\neg a) \vee b) \wedge (b \vee \Box (\neg b) \vee \Box(\neg a))
		\geqslant
		(a \vee \Box \neg a) \wedge (b \vee \Box \neg b) \in \nabla.
		\end{multline*}
	
	Obviously,
	\[
		\bot_{\GB} \vee \Box \neg \bot_{\GB} =
		\bot \vee \Box \neg \bot = 1 \in \nabla. \qedhere
	\]
	
\end{proof}

We will explore when \GsT is the domain of some twist-structure over a Heyting algebra. So it is important to consider its invariants. Define the sets:
\begin{align*}
	\nabg := \{ a \vee b \mid (a, b) \in \GsT \},\\
	\Delg := \{ a \wedge b \mid (a, b) \in \GsT \}.
\end{align*}
The following lemma gives other descriptions of these sets.

\begin{lemma}\label{l312}\hfill
	
	\begin{enumerate}[1.]
		
		\item $\nabg = \nabla \cap \GsB = \nabla \cap \GmT =
		\nabla \cap \Lnabla$.
		
		\item $\Delg = \Delta \cap \GsB = \Delta \cap \GmT$.
		
	\end{enumerate}
	
\end{lemma}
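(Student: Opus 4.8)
The plan is to prove both items by a short round of mutual inclusions; since the two parts are dual, I would develop them side by side.

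For item~1, I would begin with $\nabg \subseteq \nabla \cap \GsB$: an element of $\nabg$ is $a \vee b$ with $(a,b) \in \GsT$, whence $a \vee b \in \nabla$ by the definition of $\bfT = \TwBND$, while $a, b \in \GsB$ together with the closure of $\GsB$ under $\vee$ give $a \vee b \in \GsB$. For the reverse inclusion, given $c \in \nabla \cap \GsB$ I would exhibit the witnessing pair $(c, \bot)$: it belongs to $\GsT$ since $c \vee \bot = c \in \nabla$, $c \wedge \bot = \bot \in \Delta$ (any ideal contains $\bot$), $\Box c = c$ and $\Box \bot = \bot$; and $c \vee \bot = c$ shows $c \in \nabg$. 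Hence $\nabg = \nabla \cap \GsB$. The two remaining equalities then drop out of the inclusion cycle
\[
	\nabla \cap \Lnabla
	\;\subseteq\;
	\nabla \cap \GmT
	\;\subseteq\;
	\nabla \cap \GsB
	\;\subseteq\;
	\nabla \cap \Lnabla ,
\]
whose first step is $\Lnabla \subseteq \GmT$ and whose second is $\GmT \subseteq \GsB$ (both from Lemma~\ref{l311}), and whose third holds because $c \leqslant c \vee \Box \neg c$ with $\nabla$ upward closed, so $c \in \nabla \cap \GsB$ forces $c \vee \Box \neg c \in \nabla$, i.e.\ $c \in \nabla \cap \Lnabla$.

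For item~2 the argument is dual. First $\Delg \subseteq \Delta \cap \GsB$, because $a \wedge b \in \Delta$ for every $(a,b) \in \bfT$ and $\GsB$ is closed under $\wedge$. For $\Delta \cap \GsB \subseteq \Delg$, given $c \in \Delta \cap \GsB$ I would use the pair $(1, c)$, which lies in $\GsT$ since $1 \vee c = 1 \in \nabla$ (a filter contains the greatest element), $1 \wedge c = c \in \Delta$, $\Box 1 = 1$ and $\Box c = c$; then $1 \wedge c = c$ puts $c$ in $\Delg$. This gives $\Delg = \Delta \cap \GsB$. Finally $\Delta \cap \GmT \subseteq \Delta \cap \GsB$ is once more $\GmT \subseteq \GsB$, and for the reverse inclusion the same pair $(1, c) \in \GsT$ yields $c = \pi_2(1, c) \in \pi_2(\GsT) = \GmT$ by Lemma~\ref{l311}.

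All the verifications are one-liners, using only that $\GsB$ is closed under $\vee$ and $\wedge$, that $\nabla$ contains $1$ and $\Delta$ contains $\bot$, and the elementary \TBA identities $\Box 1 = 1$, $\Box \bot = \bot$, $\Box \Box x = \Box x$. I do not expect any genuine obstacle: the only steps that are not purely mechanical are the choices of the witnessing pairs $(c, \bot)$ and $(1, c)$, and the appeal to $\pi_2(\GsT) = \GmT$ in order to pass between the $\GsB$- and $\GmT$-versions of each statement.
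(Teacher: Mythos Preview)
Your proposal is correct and follows essentially the same route as the paper: the same witnessing pairs (up to swapping coordinates in item~2) and the same appeals to Lemma~\ref{l311}. The only cosmetic differences are that the paper runs item~1 as a single four-term cycle $\nabg \subseteq \nabla \cap \Lnabla \subseteq \nabla \cap \GmT \subseteq \nabla \cap \GsB \subseteq \nabg$, and in item~2 it obtains $a \wedge b \in \GmT$ directly from $(a,b) \wedge_{\bfB^{\bowtie}} \rsim_{\bfB^{\bowtie}}(a,b) = (a \wedge b, a \vee b) \in \GsT$ rather than via $\pi_2(\GsT) = \GmT$.
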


\begin{proof}
	
	1. Let's prove $\nabla \cap \GsB \subseteq \nabg$. If
	$x \in \nabla \cap \GsB$, then $x \vee \bot = x \in \nabla$ and
	$x \wedge \bot = \bot \in \Delta$, so $(x, \bot) \in \bfT$.
	Moreover, $(x, \bot) \in \GsT $, since $\Box x = x$.
	Thus, $x \vee \bot = x \in \nabg$.
	
	Let's show that $\nabg \subseteq \nabla \cap \Lnabla$. If $x \in \nabg$,
	then $x = a \vee b$ for $(a, b) \in \GsT$. Obviously, $\Box x = x$
	and $x = a \vee b \in \nabla$, so $x \vee \Box \neg x \in \nabla$
	and $x \in \Lnabla$.
	
	Items \ref{l3112} and \ref{l3116} of Lemma~\ref{l311} imply:
	\[
			\nabg
		\subseteq
			\nabla \cap \Lnabla
		\subseteq
			\nabla \cap \GmT
		\subseteq
			\nabla \cap \GsB
		\subseteq
			\nabg.
	\]
	
	2. Let's prove $\Delg \subseteq \Delta \cap \GmT$. If
	$x \in \Delg$, then $x = a \wedge b$ for $(a, b) \in \GsT$.
	Firstly, $x = a \wedge b \in \Delta$.
	Secondly, item \ref{l3113} of Lemma~\ref{l311} implies
	$(a, b) \wedge_{\bfB^{\bowtie}} \rsim_{\bfB^{\bowtie}} (a, b) =
	(a \wedge b, a \vee b) \in \GsT$, so $x = a \wedge b \in \GmT$.
	
	Let's show that $\Delta \cap \GsB \subseteq \Delg $. If
	$x \in \Delta \cap \GsB$, then $x \vee 1 = 1 \in \nabla$,
	$x \wedge 1 = x \in \Delta$, so $(x, 1) \in \bfT$. Moreover,
	$(x, 1) \in \GsT $, because $\Box x = x$, so
	$x \wedge 1 = x \in \Delg$.
	
	From item \ref{l3112} of Lemma~\ref{l311} we have
	\[
			\Delg
		\subseteq
			\Delta \cap \GmT
		\subseteq
			\Delta \cap \GsB
		\subseteq
			\Delg. \qedhere
	\]
	
\end{proof}

The following lemma will be useful in Subsection~\ref{ss33}.

\begin{lemma}\label{l313}
	
	$\neg_{\GB} \neg_{\GB} a \in \Delg$ for all $a \in \Delg$.
	
\end{lemma}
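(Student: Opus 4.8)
We must show that whenever $x \in \Delg$, then $\neg_{\GB}\neg_{\GB} x$ is again in $\Delg$. By Lemma~\ref{l312}(2) it suffices to prove that $\neg_{\GB}\neg_{\GB} x \in \Delta \cap \GsB$. Since $\neg_{\GB}\neg_{\GB} x$ is an open element of \bfB by construction (being an element of the Heyting algebra \GB, whose carrier is $\GsB$), the membership $\neg_{\GB}\neg_{\GB} x \in \GsB$ is automatic, so the whole problem reduces to showing
\[
	x \in \Delta \cap \GsB \quad\Longrightarrow\quad \neg_{\GB}\neg_{\GB} x \in \Delta.
\]

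The key step is to unwind the double Heyting negation in \GB into modal-Boolean terms of \bfB. Recall that $\bot_{\GB} = \bot_{\bfB}$ and $a \rarr_{\GB} b = \Box(a \rarr b)$, so for an open element $x$ we have $\neg_{\GB} x = \Box(x \rarr \bot) = \Box \neg x$ (intuitionistic negation in \bfB equals $(-)\rarr\bot$), and therefore $\neg_{\GB}\neg_{\GB} x = \Box\,\neg\,\Box\,\neg x = \Box\,\Diamond\,(\neg\neg x)$, using the De~Morgan law $\neg\Box y = \Diamond\neg y$ valid in any \TBA. Now I would estimate this element from above. Since $x \leql_{\bfB} \neg\neg x$ and hence $\neg\neg x = x \vee \neg\neg x$, and since in a Boolean algebra $\neg\neg x = x$, actually $\neg\neg x = x$, so $\neg_{\GB}\neg_{\GB} x = \Box\Diamond x$. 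Thus the claim is: if $x \in \Delta$ and $\Box x = x$, then $\Box\Diamond x \in \Delta$.

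To finish, I would use that $\Delta$ is a closed ideal of \bfB, i.e. an ideal closed under $\Diamond$. Since $x \in \Delta$, closedness gives $\Diamond x \in \Delta$; and $\Box\Diamond x \leql_{\bfB} \Diamond x$ by the axiom $\Box a \leql a$, so $\Box\Diamond x \in \Delta$ because ideals are downward closed. This gives $\neg_{\GB}\neg_{\GB} x = \Box\Diamond x \in \Delta \cap \GsB = \Delg$, as required.

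**Main obstacle.** The only delicate point is the translation of the double negation $\neg_{\GB}\neg_{\GB}$ from the Heyting algebra \GB back into operations of \bfB: one must be careful that intuitionistic negation inside \GB is $\Box$ applied to the Boolean negation (because of the definition $a \rarr_{\GB} b := \Box(a\rarr b)$), and that on open elements this collapses as computed above — in particular that $\Box\Diamond x$, rather than a more complicated term, is the right representative. Once this identification is made, the conclusion follows purely from the two defining closure properties of $\Delta$ (downward closure and closure under $\Diamond$) together with the \TBA inequality $\Box a \leql a$, so no further computation is needed.
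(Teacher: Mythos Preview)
Your proof is correct and follows essentially the same route as the paper: use Lemma~\ref{l312} to identify $\Delg = \Delta \cap \GsB$, compute $\neg_{\GB}\neg_{\GB} x = \Box\neg\Box\neg x = \Box\Diamond x$, and then conclude from $x \in \Delta \Rightarrow \Diamond x \in \Delta$ (closedness of $\Delta$) together with $\Box\Diamond x \leql \Diamond x$. The only difference is cosmetic --- your brief detour through $\Box\Diamond(\neg\neg x)$ before simplifying via $\neg\neg x = x$ is unnecessary in a Boolean algebra, and the paper writes the identification $\Box\Diamond x = \neg_{\GB}\neg_{\GB} x$ directly.
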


\begin{proof}
	
	In view of Lemma~\ref{l312}, $\Delg = \Delta \cap \GsB $. If
	$a \in \Delg$, then, obviously,
	$\neg_{\GB} \neg_{\GB} a \in \GsB$. We have $ a \in \Delta$, and then
	$ \Diamond a \in \Delta $, since $\Delta \in \IdiB$.
	Since $\Box \Diamond a \leql \Diamond a$, we obtain
	\[
		\Box \Diamond a =
		\Box \neg \Box \neg a =
		\neg_{\GB} \neg_{\GB} a \in \Delta. \qedhere
	\]
	
\end{proof}

The following lemma gives us conditions for \GsT to be the domain of some twist-structure over a subalgebra of \GB.

\begin{lemma}\label{l314}

	\[
		\GmT \subseteq \Lnabla \iff \GsT \text{ is closed under
		$\rarr_{\GB^{\bowtie}}$.}
	\]
	
\end{lemma}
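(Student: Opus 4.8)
My plan is to prove the equivalence by directly computing the operation $\rarr_{\GB^{\bowtie}}$ on pairs from \GsT. Recall that on the twist-structure over the Heyting algebra \GB\ this operation is $(a,b)\rarr_{\GB^{\bowtie}}(c,d)=(a\rarr_{\GB}c,\,a\wedge d)$ with $a\rarr_{\GB}c=\Box(a\rarr c)$, and that $\bfT=\TwBND$ is precisely the set of pairs $(u,v)$ satisfying $u\vee v\in\nabla$ and $u\wedge v\in\Delta$. Hence membership in \GsT\ of a pair of open elements reduces to checking these two conditions.

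For the implication ``$\Rightarrow$'' I would assume $\GmT\subseteq\Lnabla$, take arbitrary $(a,b),(c,d)\in\GsT$, form the pair $(\Box(a\rarr c),\,a\wedge d)$, and note that both coordinates are open: the first automatically, the second because $a,d\in\GsB$ and \GsB\ is closed under $\wedge$. It then remains to verify the two membership conditions for \bfT. The ideal condition $\Box(a\rarr c)\wedge(a\wedge d)\in\Delta$ is unconditional: from $\Box(a\rarr c)\leql a\rarr c$ and $a\wedge(a\rarr c)=a\wedge c$ one gets that this element is below $c\wedge d\in\Delta$. The filter condition $\Box(a\rarr c)\vee(a\wedge d)\in\nabla$ is the real work: using monotonicity of $\Box$, openness of $a$ and $c$, and $a\rarr c=\neg a\vee c$ in the Boolean algebra \bfB, one has $\Box(a\rarr c)\geqslant\Box\neg a\vee c$, so it suffices to show $\Box\neg a\vee c\vee(a\wedge d)\in\nabla$; distributing this join over $a\wedge d$ rewrites it as $(a\vee\Box\neg a\vee c)\wedge(\Box\neg a\vee c\vee d)$, whose first conjunct dominates $a\vee\Box\neg a$ and whose second dominates $c\vee d$. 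Here $a\in\GmT\subseteq\Lnabla$ puts $a\vee\Box\neg a$ into $\nabla$, while $c\vee d\in\nabla$ holds because $(c,d)\in\bfT$; since $\nabla$ is a filter, the conjunction lies in $\nabla$, and so does the larger element.

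For the implication ``$\Leftarrow$'' I would assume \GsT\ is closed under $\rarr_{\GB^{\bowtie}}$ and test it against $\bot_{\GB^{\bowtie}}=(\bot,1)$, which lies in \GsT\ by item~\ref{l3113} of Lemma~\ref{l311}. Given $x\in\GmT$, pick $b$ with $(x,b)\in\GsT$; then $(x,b)\rarr_{\GB^{\bowtie}}(\bot,1)=(x\rarr_{\GB}\bot,\,x\wedge 1)=(\Box\neg x,\,x)$ must lie in $\GsT\subseteq\bfT$, which forces $x\vee\Box\neg x\in\nabla$; together with $x\in\GsB$ (item~\ref{l3112} of Lemma~\ref{l311}) this gives $x\in\Lnabla$.

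The main obstacle is the filter condition in the ``$\Rightarrow$'' direction, which is also the unique place the hypothesis $\GmT\subseteq\Lnabla$ enters; everything else — openness of the coordinates, the ideal condition, and the entire ``$\Leftarrow$'' direction — is routine manipulation of the twist-structure operations together with the facts about open elements, {\TBA}s, and the invariants $\nabla,\Delta$ already recorded.
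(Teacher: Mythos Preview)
Your proof is correct and follows essentially the same approach as the paper. The only cosmetic difference is in the filter computation of the ``$\Rightarrow$'' direction: the paper distributes $\Box(a\rarr c)\vee(a\wedge d)$ directly into $(\Box(a\rarr c)\vee a)\wedge(\Box(a\rarr c)\vee d)$ and then bounds each conjunct below, whereas you first pass to the smaller element $\Box\neg a\vee c\leql\Box(a\rarr c)$ before distributing; the two computations are interchangeable.
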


\begin{proof}
	
	$\Longrightarrow$. Let $(a, b),(c, d) \in \GsT$. We know that $c \wedge d \in \Delta$, so $\Box (a \rarr c) \wedge (a \wedge d) \in \Delta$ can be shown with ease:
	\begin{multline*}
		\Box (a \rarr c) \wedge (a \wedge d)
		\leql
		(a \rarr c) \wedge (a \wedge d) = \\
		= (a \wedge (a \rarr c)) \wedge d =
		a \wedge c \wedge d
		\leql
		c \wedge d \in \Delta.
	\end{multline*}
	
	Notice that
	\[
		\Box (a \rarr c) \vee (a \wedge d) =
		(\Box (a \rarr c) \vee a)
		\wedge
		(\Box (a \rarr c) \vee d),
	\]
	and we can easily show $ \Box (a \rarr c) \vee d \in \nabla$:
	\[
		\Box (a \rarr c) \vee d = \Box (\neg a \vee c) \vee d
		\geqslant
		\Box c \vee d = c \vee d \in \nabla.
	\]
	
	In view of $a \in \GmT$ and $\GmT \subseteq \Lnabla$ we have
	$\Box (a \rarr c) \vee a \in \nabla$:
	\[
		\Box (a \rarr c) \vee a = \Box (\neg a \vee c) \vee a
		\geqslant
		\Box (\neg a) \vee a \in \nabla.
	\]
	
	So $\Box (a \rarr c) \vee (a \wedge d) \in \nabla$, and from 
	$a \wedge d \in \GsB$ we have thus proved
	\[
		(a, b) \rarr_{\GB^{\bowtie}} (c, d) =
		(\Box (a \rarr c), a \wedge d) \in \GsT.
	\]
	
	$\Longleftarrow$. If $a \in \GmT$, then $(a, b) \in \GsT$ for
	some $b \in \GsB$. We know that
	$\bot_{\GB^{\bowtie}} = (\bot, 1) \in \GsT$, so, since
	\GsT is closed under $\rarr_{\GB^{\bowtie}}$,
	\[
		(a, b) \rarr_{\GB^{\bowtie}} (\bot, 1) =
		(\Box (a \rarr \bot), a \wedge 1) =
		(\Box \neg a, a) \in \GsT,
	\]
	which implies $ a \vee \Box \neg a \in \nabla $. Since $\Box a = a$,
	we get $a \in \Lnabla$. \qedhere
	
\end{proof}

And now we explain how to extract a twist-structure over a Heyting algebra from a twist-structure over a \TBA.

\begin{proposition} 
	
	Let $\GmT = \Lnabla$. Then:
	\begin{enumerate}[1.]
		
		\item $ \boldsymbol{\Gamma}(\bfT) = \langle \GmT;
		\vee_{\GB}, \wedge_{\GB}, \rarr_{\GB}, \bot_{\GB} \rangle $
		is a subalgebra of \GB.
		
		\item $\GT = \langle \GsT;
		\vee_{\GB^{\bowtie}}, \wedge_{\GB^{\bowtie}},
		\rarr_{\GB^{\bowtie}}, \bot_{\GB^{\bowtie}},
		\rsim_{\GB^{\bowtie}} \rangle$
		is a twist-structure over the Heyting algebra
		$\boldsymbol{\Gamma}(\bfT)$.
		More precisely,
		\[
			\GT
			= Tw(\boldsymbol{\Gamma}(\bfT), \nabg, \Delg).
		\]
		
	\end{enumerate}
	
\end{proposition}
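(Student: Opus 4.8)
The plan is to read off both items from the lemmas already established in this subsection; the assumption $\GmT = \Lnabla$ is exactly what is needed to activate item~\ref{l3117} of Lemma~\ref{l311} and the forward direction of Lemma~\ref{l314}.

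For item~1, by item~\ref{l3117} of Lemma~\ref{l311} the set \Lnabla, equipped with the operations $\vee_{\GB}, \wedge_{\GB}, \rarr_{\GB}, \bot_{\GB}$ inherited from \GB, is a subalgebra of \GB; since $\GmT = \Lnabla$ by hypothesis, the algebra $\boldsymbol{\Gamma}(\bfT)$ is literally this subalgebra, and item~1 follows at once. (One may also see closure of \GmT under $\rarr_{\GB}$ directly from Lemma~\ref{l314}: given $a, c \in \GmT$, choose $b, d$ with $(a, b), (c, d) \in \GsT$; then $(a, b) \rarr_{\GB^{\bowtie}} (c, d) = (\Box (a \rarr c), a \wedge d) = (a \rarr_{\GB} c, a \wedge d)$ lies in \GsT, so $a \rarr_{\GB} c \in \pi_1(\GsT) = \GmT$, while closure under $\vee_{\GB}, \wedge_{\GB}$ and $\bot_{\GB} \in \GmT$ are already item~5 of Lemma~\ref{l311}.)

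For item~2, the key observation is that, \emph{because} $\boldsymbol{\Gamma}(\bfT)$ is a subalgebra of \GB, the operations of the full twist-structure $\boldsymbol{\Gamma}(\bfT)^{\bowtie}$ agree with those of $\GB^{\bowtie}$ on $\GmT \times \GmT$; hence \GT, as written, is precisely the restriction of the $\GB^{\bowtie}$-operations to \GsT, and it remains only to verify that \GsT is the domain of a subalgebra of $\boldsymbol{\Gamma}(\bfT)^{\bowtie}$ with first projection \GmT. Here $\GsT \subseteq \GmT \times \GmT$ holds since $\GmT = \pi_1(\GsT)$ by definition and $\pi_2(\GsT) = \GmT$ by item~\ref{l3114} of Lemma~\ref{l311}; closure of \GsT under $\vee_{\GB^{\bowtie}}, \wedge_{\GB^{\bowtie}}, \rsim_{\GB^{\bowtie}}$ together with $\bot_{\GB^{\bowtie}} \in \GsT$ is item~\ref{l3113} of Lemma~\ref{l311}; closure under $\rarr_{\GB^{\bowtie}}$ is Lemma~\ref{l314}, whose hypothesis $\GmT \subseteq \Lnabla$ is part of the assumption; and $\pi_1(\GsT) = \GmT$ by the definition of \GmT. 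Thus \GT is a twist-structure over $\boldsymbol{\Gamma}(\bfT)$.

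For the sharper statement I would invoke the general fact on twist-structures recalled in the preliminaries: every twist-structure \clB over a Heyting algebra $\mathbf{C}$ satisfies $\clB = Tw(\mathbf{C}, \nabla(\clB), \Delta(\clB))$, with $\nabla(\clB) \in \mathcal{F}_{d}(\mathbf{C})$ and $\Delta(\clB) \in \mathcal{I}(\mathbf{C})$. Applying this to $\clB = \GT$ over $\mathbf{C} = \boldsymbol{\Gamma}(\bfT)$ and noting that $\nabla(\GT) = \{ a \vee b \mid (a, b) \in \GsT \} = \nabg$ and $\Delta(\GT) = \{ a \wedge b \mid (a, b) \in \GsT \} = \Delg$ by the very definitions of $\nabg$ and $\Delg$ yields $\GT = Tw(\boldsymbol{\Gamma}(\bfT), \nabg, \Delg)$ (and, as a byproduct, $\nabg \in \mathcal{F}_{d}(\boldsymbol{\Gamma}(\bfT))$, $\Delg \in \mathcal{I}(\boldsymbol{\Gamma}(\bfT))$). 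I do not anticipate any genuine obstacle here: the substance has already been packed into Lemmas~\ref{l311} and~\ref{l314}, and the only point demanding a moment's care is that the operations presenting \GT are unambiguous whether read inside $\GB^{\bowtie}$ or inside $\boldsymbol{\Gamma}(\bfT)^{\bowtie}$ — which is exactly what item~1 secures.
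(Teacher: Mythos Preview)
Your proposal is correct and follows essentially the same route as the paper: item~1 via item~\ref{l3117} of Lemma~\ref{l311}, item~2 via items~\ref{l3113} and~\ref{l3114} of Lemma~\ref{l311} together with Lemma~\ref{l314}, and the identification $\GT = Tw(\boldsymbol{\Gamma}(\bfT), \nabg, \Delg)$ from the agreement of $\vee_{\GB}, \wedge_{\GB}$ with $\vee, \wedge$. The only cosmetic difference is that the paper reads off the last equality directly from the definitions of the operations, whereas you phrase it as an instance of the general representation $\clB = Tw(\mathbf{C}, \nabla(\clB), \Delta(\clB))$; the content is the same.
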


\begin{proof}
	
	1. This is just a consequence of item \ref{l3117} of Lemma~\ref{l311}.
	
	2. In view of item~\ref{l3114} of Lemma~\ref{l311} $\GsT \subseteq
	\pi_1 (\GsT) \times \pi_2 (\GsT) =
	\GmT \times \GmT$, and $\pi_1 (\GsT) = \GmT$ by definition. Obviously,
	the operations of $\GB^{\bowtie}$ coincide with operations of
	$\boldsymbol{\Gamma}(\bfT)^{\bowtie}$. The condition $\GmT = \Lnabla$
	implies by
	Lemma~\ref{l314} that
	\GsT is closed under $\rarr_{\GB^{\bowtie}}$. In view of item
	\ref{l3113} of Lemma~\ref{l311} \GsT is
	closed under the operations $\vee_{\GB^{\bowtie}}$,
	$\wedge_{\GB^{\bowtie}}$, $\rsim_{\GB^{\bowtie}}$ and
	$\bot_{\GB^{\bowtie}} \in \GsT$. So we have that
	$\GT = \langle \GsT; \vee_{\GB^{\bowtie}}, \wedge_{\GB^{\bowtie}},
	\rarr_{\GB^{\bowtie}}, \bot_{\GB^{\bowtie}}, \rsim_{\GB^{\bowtie}}
	\rangle$ is a twist-structure over $\boldsymbol{\Gamma}(\bfT)$. From
	$a \vee_{\GB} b = a \vee b$ and $a \wedge_{\GB} b = a \wedge b$ we get:
	\[
		\GT
		=
		Tw(\boldsymbol{\Gamma}(\bfT), \nabg, \Delg). \qedhere
	\]
	
\end{proof}

\begin{definition}
	
	In case $\GmT = \Lnabla$ we call the twist-structure \GT\
	the {\em algebra of open pairs of the twist-structure} \bfT.
	
\end{definition}

\subsection{Preserving the truth}

We work with the same \TBA \bfB, \mbox{$\nabla \in \FbB$,} $\Delta \in \IdiB$ and $\bfT := \TwBND$.

The main goal of the present subsection is to find conditions guaranteeing that for every $\varphi \in \Fmls$ the following holds:
\begin{equation}
	\GT \models \varphi
	\iff
	\bfT \models \Tb \varphi. \tag{TwTop}\label{TwTop}
\end{equation}

Assume that $\GmT = \Lnabla$ and
$v \colon \mathrm{Prop} \rarr \GsT$. Then functions
$h_{1}(v) \colon \Fmlbs \rarr \bfT$ and
$h_{2}(v) \colon \Fmls \rarr \GT$ denote unique homomorphisms such that
$h_{1}(v) \upharpoonright_{\mathrm{Prop}} = v$ and
$h_{2}(v) \upharpoonright_{\mathrm{Prop}} = v$. Also, for each map
$w \colon \mathrm{Prop} \rarr \bfT$, we denote by
$h_{1}(w)$ the unique \bfT-valuation such that
$h_{1}(w) \upharpoonright_{\mathrm{Prop}} = w$.

\begin{lemma}\label{l321}
	
	Suppose that $\GmT = \Lnabla$ and
	$v \colon \mathrm{Prop} \rarr \GsT$. Then for every
	$\varphi \in \Fmls$ it is true that
	$\pi_{1}( h_{1}(v)( \Tb \varphi ) ) = \pi_{1}( h_{2}(v)( \varphi ))$.
	
\end{lemma}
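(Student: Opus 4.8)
The proof is an induction on the construction of $\varphi \in \Fmls$, but the crucial point is that the translation \Tb has already pushed strong negation all the way inside, so that \rsim occurs in $\Tb\varphi$ only in front of propositional variables and $\bot$. This means that after translation we are dealing with a formula of \Fmlb built from the ``atoms'' $\Box p$, $\Box\rsim p$, $\bot$, $\rsim\bot$, and a moment's thought shows we should really induct on the structure of $\varphi$ itself, simultaneously tracking how \Tb treats both $\varphi$ and $\rsim\varphi$. So I would set up the induction with two parallel claims for each $\varphi$: that $\pi_1(h_1(v)(\Tb\varphi)) = \pi_1(h_2(v)(\varphi))$ \emph{and} that $\pi_1(h_1(v)(\Tb(\rsim\varphi))) = \pi_2(h_2(v)(\varphi))$; the second clause is what makes the $\rsim$, $\rarr$, and $\vee/\wedge$ cases go through, since $\pi_2$ of $\varphi$ in \GT is exactly $\pi_1$ of $\rsim_{\GB^\bowtie}$ applied to $h_2(v)(\varphi)$.

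First I would dispatch the base cases. For $\varphi = p$: $h_2(v)(p) = v(p) \in \GsT$ has $\Box\pi_1(v(p)) = \pi_1(v(p))$, so $\pi_1(h_1(v)(\Box p)) = \Box\pi_1(v(p)) = \pi_1(v(p)) = \pi_1(h_2(v)(p))$, using that $h_1(v)$ and $h_2(v)$ agree with $v$ on $\mathrm{Prop}$ and Remark~\ref{rem1} to identify $\pi_1 h_1(v)$ on $\langub$-formulas with evaluation in \bfB. The companion clause for $\rsim p$ is identical using the second coordinate of $v(p)$ and item~\ref{l3114} of Lemma~\ref{l311}. For $\varphi = \bot$ and $\varphi = \rsim\bot$ both clauses are immediate from $\bot_{\GB^\bowtie} = (\bot,1)$ and the definitions of \Tb on $\bot,\rsim\bot$.

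For the inductive step I would go connective by connective. The positive cases $\wedge,\vee,\rarr$ for $\varphi$ itself are routine: by Remark~\ref{rem1}, $\pi_1 h_1(v)$ commutes with $\wedge,\vee$ and sends $\psi\rarr\chi$ to $\Box(\pi_1 h_1(v)(\psi)\rarr\pi_1 h_1(v)(\chi))$, which matches exactly how $\wedge_{\GB},\vee_{\GB},\rarr_{\GB}$ act on first coordinates in \GT (recall $a\rarr_{\GB}b = \Box(a\rarr b)$), so the induction hypothesis for $\psi,\chi$ closes these. The $\rsim$-prefixed cases are where one uses the twist-structure definitions: e.g.\ $\Tb(\rsim(\psi\rarr\chi)) = \Tb(\psi)\wedge\Tb(\rsim\chi)$, and $\pi_1$ of this is $\pi_1 h_2(v)(\psi)\wedge\pi_1 h_1(v)(\Tb(\rsim\chi)) = \pi_1 h_2(v)(\psi)\wedge\pi_2 h_2(v)(\chi)$ by the two induction hypotheses, which is precisely the second coordinate of $h_2(v)(\psi)\rarr_{\GB^\bowtie}h_2(v)(\chi) = h_2(v)(\psi\rarr\chi)$, i.e.\ $\pi_2 h_2(v)(\psi\rarr\chi)$; similarly for $\rsim(\psi\wedge\chi)$, $\rsim(\psi\vee\chi)$ using the swap in $\vee_{\bfA^\bowtie},\wedge_{\bfA^\bowtie}$. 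Finally $\varphi = \rsim\psi$: here $\Tb(\rsim\psi)$ is handled by the induction hypothesis's second clause for $\psi$, giving $\pi_1 h_1(v)(\Tb(\rsim\psi)) = \pi_2 h_2(v)(\psi) = \pi_1 h_2(v)(\rsim\psi)$ since $\rsim_{\GB^\bowtie}$ swaps coordinates; and for the companion clause one uses $\Tb(\rsim\rsim\psi)=\Tb(\psi)$ (strong negation axiom 4 / the design of \Tb) together with the first clause for $\psi$.

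The main obstacle is not any single calculation but rather \emph{getting the induction hypothesis to be strong enough}: a naive induction on $\varphi$ with only the stated equation fails at $\rsim$, $\rarr$, and the $\rsim$-distribution cases, because \Tb of a $\rsim$-prefixed formula refers to translations of subformulas and their negations, not just to $\Tb\varphi$. Strengthening to the conjunction of the two clauses (one for $\varphi$, one for $\rsim\varphi$) is the key move; once that is in place every case is a mechanical unwinding of the definitions of \Tb, of the operations of $\GB^\bowtie$, and of Remark~\ref{rem1}. One should also be slightly careful that all the pairs produced genuinely lie in \GsT so that $h_2(v)$ is well-defined into \GT — but this is exactly what the hypothesis $\GmT = \Lnabla$ buys us via the preceding Proposition, so it may be invoked rather than reproved.
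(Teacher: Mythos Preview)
Your proposal is correct and takes essentially the same approach as the paper: induction on the construction of $\varphi$, with the paper's single worked example ($\rsim(\varphi\rarr\psi)$) relying on exactly the pair of hypotheses you make explicit as your two-clause strengthening. Your formulation is simply a more careful articulation of the same routine induction the paper sketches.
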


\begin{proof}
	
	It is a long but routine work to check this by induction of the
	construction of a formula. For instance, if
	$\pi_{1}( h_{1}(v)( \Tb \varphi ) ) = \pi_{1}( h_{2}(v)( \varphi ) )$
	and
$\pi_{1}( h_{1}(v)( \Tb \rsim \psi ) ) = \pi_{1}( h_{2}(v)( \rsim \psi ) )$,
	then
	\begin{multline*}
		\pi_{1}(
			h_{1}(v)(
				\Tb( \rsim (\varphi \rarr \psi) )
			)
		)
		=
		\pi_{1}(
			h_{1}(v)(
				\Tb( \varphi ) \wedge \Tb( \rsim \psi )
			)
		) = \\
		=	
		\pi_{1}(
			h_{1}(v)(
				\Tb \varphi 
			)
		)
		\wedge_{\bfB}
		\pi_{1}(
			h_{1}(v)(
				\Tb \rsim \psi 
			)
		) = \\
		=
		\pi_{1}(
			h_{2}(v)(
				\varphi
			)
		)
		\wedge_{\GB}
		\pi_{1}(
			h_{2}(v)(
				\rsim \psi
			)
		) = \\
		=
		\pi_{1}(
			h_{2}(v)(
				\varphi
			)
		)
		\wedge_{\GB}
		\pi_{1}(
			\rsim_{\boldsymbol{\Gamma}(\bfT)^{\bowtie}} h_{2}(v)(
				\psi
			)
		) = \\
		=
		\pi_{1}(
			h_{2}(v)(
				\varphi
			)
		)
		\wedge_{\GB}
		\pi_{2}(
			h_{2}(v)(
				\psi
			)
		) = \\
		=
		\pi_{2}(	
			h_{2}(v)( \varphi )
			\rarr_{\boldsymbol{\Gamma}(\bfT)^{\bowtie}}
			h_{2}(v)( \psi )
		) = \\
		=
		\pi_{1}( \rsim_{\boldsymbol{\Gamma}(\bfT)^{\bowtie}}
			(
				h_{2}(v)( \varphi )
				\rarr_{\boldsymbol{\Gamma}(\bfT)^{\bowtie}}
				h_{2}(v)( \psi )
			)
		)
		=
		\pi_{1}(
			h_{2}(v)(
				\rsim (\varphi \rarr \psi)
			)
		). \qedhere
	\end{multline*}
	
\end{proof}

\begin{proposition}\label{p322}
	
	Suppose that $(\Box a, \Box b) \in \bfT$ for all $(a, b) \in \bfT$. Then
	$ \GsB = \GmT = \Lnabla $. Furthermore, for every $\varphi \in \Fmls$,
	\[
		\GT \models \varphi \iff \bfT \models \Tb \varphi . \tag{TwTop}
	\]
	
\end{proposition}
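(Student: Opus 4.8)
The proposition splits into two claims: (a) the hypothesis $(\Box a,\Box b)\in\bfT$ for all $(a,b)\in\bfT$ forces $\GsB=\GmT=\Lnabla$; and (b) under this hypothesis the equivalence \eqref{TwTop} holds. I would prove (a) first, then feed it into Lemma~\ref{l321} together with the topological property \eqref{Top} to obtain (b).

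\textbf{Step 1: $\GsB\subseteq\GmT$.} Take any $a\in\GsB$, i.e. $\Box a=a$. By item~1 of Lemma~\ref{l311}, $a\wedge\Box\neg a=\bot\in\Delta$; trivially $a\vee\Box\neg a\leql 1$, but that is not enough — I need $a\vee\Box\neg a\in\nabla$. Here is where the hypothesis enters. Since $(a,\Box\neg a)$ need not lie in $\bfT$ a priori, instead start from the pair $(a,\neg a)$: we have $a\vee\neg a\in\nabla$? No — $\nabla$ is an arbitrary open filter, so this is false in general. The right move: consider some pair in $\bfT$ whose first coordinate is $a$. Every element of $B$ is a first coordinate of some pair in $\bfT$ because $\pi_1(\bfT)=B$ (twist-structure condition $\pi_1(\clB)=C$); so pick $(a,b)\in\bfT$. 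Then by hypothesis $(\Box a,\Box b)=(a,\Box b)\in\bfT$, and applying $\rsim$ and $\rarr\bot_{\bfB^{\bowtie}}$: $(a,\Box b)\rarr_{\bfB^{\bowtie}}(\bot,1)=(\Box(a\rarr\bot),a\wedge 1)=(\Box\neg a,a)\in\bfT$. By the definition of $\TwBND$ this gives $a\vee\Box\neg a\in\nabla$, hence $a\in\Lnabla$, and by item~\ref{l3116} of Lemma~\ref{l311}, $a\in\GmT$. Combined with item~\ref{l3112} of Lemma~\ref{l311} ($\GmT\subseteq\GsB$) and $\Lnabla\subseteq\GmT$ we get $\GsB\subseteq\Lnabla\subseteq\GmT\subseteq\GsB$, so all three coincide.

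\textbf{Step 2: the preservation equivalence.} Since $\GmT=\Lnabla$, the previous Proposition applies: $\GT=Tw(\boldsymbol{\Gamma}(\bfT),\nabg,\Delg)$ is a genuine twist-structure over a Heyting algebra, and Lemma~\ref{l321} is available. For the forward direction of \eqref{TwTop}, suppose $\bfT\not\models\Tb\varphi$; then there is a $\bfT$-valuation $w$ with $\pi_1(w(\Tb\varphi))\neq 1_{\bfB}$. The issue is that $w$ need not factor through $\GsT$. I would handle this by noting that $\Tb\varphi$ has all boxes on the outside of each subformula in the appropriate places — more precisely, by the shape of $\Tb$ every subterm denotes, after evaluation, a pair both of whose coordinates are open (one checks by induction that $h_1(w)(\Tb\psi)$ and $h_1(w)(\Tb\rsim\psi)$ always land in $\GsT$, using $\Box\Box=\Box$, $\Box\Diamond=\Diamond$, and the hypothesis to see closure). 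Actually the cleaner route: define $v\colon\mathrm{Prop}\to\GsT$ by $v(p):=(\Box w_1(p),\Box w_2(p))$ where $w(p)=(w_1(p),w_2(p))$; the hypothesis guarantees $v(p)\in\bfT$, and $\Box(\Box x)=\Box x$ gives $v(p)\in\GsT$. Because $\Tb$ puts a $\Box$ in front of every variable and in front of $\rsim$ of every variable, an induction shows $h_1(w)(\Tb\varphi)=h_1(v)(\Tb\varphi)$. Now by Lemma~\ref{l321}, $\pi_1(h_1(v)(\Tb\varphi))=\pi_1(h_2(v)(\varphi))$, so $h_2(v)$ witnesses $\GT\not\models\varphi$. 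The converse direction is symmetric: from a $\GsT$-valuation refuting $\varphi$ in $\GT$, view it as a $\bfT$-valuation $w$ (via the inclusion $\GsT\hookrightarrow\bfT$) and apply Lemma~\ref{l321} the other way, with $h_1(w)$ agreeing with $h_1(v)$ since $v$ already maps into $\GsT$.

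\textbf{Main obstacle.} The delicate point is Step~1 — making precise \emph{why} the abstract hypothesis $(\Box a,\Box b)\in\bfT$ lets us land $(\Box\neg a,a)$ in $\bfT$ and thereby conclude $a\vee\Box\neg a\in\nabla$; this is exactly the place where one must be careful that $\pi_1(\bfT)=B$ supplies a companion $b$, and that the twist operations $\rsim,\rarr$ produce the desired pair. A secondary subtlety in Step~2 is justifying that an arbitrary $\bfT$-valuation can be replaced, without changing the value of a $\Tb$-translated formula, by one through $\GsT$ — this rests entirely on the syntactic fact (already noted after the definition of $\Tb$) that in $\Tb\varphi$ the symbol $\rsim$ occurs only in front of variables and $\bot$, and every variable is boxed, so only the open "core" of each variable's value is ever consulted. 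Once these two points are settled the rest is the routine induction of Lemma~\ref{l321} plus bookkeeping.
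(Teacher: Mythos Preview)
Your overall strategy matches the paper's, but Step~1 contains a real slip that derails the argument as written. You dismiss the pair $(a,\neg a)$ on the grounds that $a\vee\neg a$ need not lie in $\nabla$; but $\bfB$ is a \TBA, so its $\langui$-reduct is a \emph{Boolean} algebra, whence $a\vee\neg_{\bfB} a = 1\in\nabla$ and $a\wedge\neg_{\bfB} a=\bot\in\Delta$ automatically. Thus $(a,\neg a)\in\bfT$ for \emph{every} $a\in B$, and the paper's argument is then immediate: apply the hypothesis to obtain $(\Box a,\Box\neg a)=(a,\Box\neg a)\in\bfT$ (for $a\in\GsB$), hence $a\vee\Box\neg a\in\nabla$ and $a\in\Lnabla$. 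Your detour via $\pi_1(\bfT)=B$ is therefore unnecessary, and it also contains a computational error: in $\bfB^{\bowtie}$ the implication is the plain Boolean one, $(x,y)\rarr_{\bfB^{\bowtie}}(z,w)=(x\rarr_{\bfB} z,\,x\wedge w)$, with no $\Box$ on the first coordinate --- you have confused it with $\rarr_{\GB^{\bowtie}}$. The correct computation yields only $(\neg_{\bfB} a, a)\in\bfT$, and you would still need one further application of the hypothesis to reach $(\Box\neg a, a)\in\bfT$.

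In Step~2 your approach is the paper's, but the claim $h_1(w)(\Tb\varphi)=h_1(v)(\Tb\varphi)$ as elements of $\bfT$ is too strong: already for $\Tb p=\Box p$ the second coordinates are $\Diamond w_2(p)$ versus $\Diamond\Box w_2(p)$, which need not coincide. Only the first projections agree, and that is all you need, since validity in $\bfT$ is defined through $\pi_1$. The paper makes this precise by writing $\Tb\varphi=\psi(\Box p_1,\dots,\Box p_n,\Box\rsim p_1,\dots,\Box\rsim p_n,\rsim\bot)$ for some $\psi\in\Fmlb$ and then computing $\pi_1$ directly in $\bfB$ via Remark~\ref{rem1} and $\Box\Box=\Box$; this is exactly your ``only the open core is consulted'' observation, but it must be stated at the level of first coordinates.
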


\begin{proof}
	
	Let us show that $\GsB \subseteq \Lnabla$. If $x \in \GsB$, then
	$(x, \neg x) \in \bfT$, because $x \wedge \neg x = \bot \in \Delta$,
	$x \vee \neg x = 1 \in \nabla$. So
	$(\Box x, \Box \neg x) = (x, \Box \neg x) \in \bfT$, which
	implies $x \vee \Box \neg x \in \nabla$. Thus, $x \in \Lnabla$.
	
	Now, we prove the equivalence \eqref{TwTop}.
	
	$\Longleftarrow$. Let $\varphi \in \Fmls$ and
	$\bfT \models \Tb \varphi$. Consider an arbitrary map
	$v \colon \mathrm{Prop} \rarr \GsT$. By Lemma~\ref{l321}, we have
	\[\pi_{1}( h_{2}(v)( \varphi ) ) = \pi_{1}( h_{1}(v)( \Tb \varphi ) ) =
	1_{\bfB} = 1_{\GB}.\]
	
	$\Longrightarrow$. Let $\varphi \in \Fmls$ and
	$\GT \models \varphi$. Consider an arbitrary map
	$v \colon \mathrm{Prop} \rarr \bfT$. Define
	$w \colon \mathrm{Prop} \rarr \GsT$ by
	\[
		w(p) := ( \Box \pi_{1}(v(p)), \Box \pi_{2}(v(p)) ).
	\]
	The map $w$ is well-defined, since $(\Box a, \Box b) \in \bfT$ for all
	$(a, b) \in \bfT$.
	
	We know that \rsim occurs in $\Tb \varphi$ only in front of
	propositional variables and $\bot$. From $\Tb(p) = \Box p$ and
	$\Tb(\rsim p) = \Box \rsim p$ we have that there is a $\Box$ before each
	occurrence of $p$ or $\rsim p$. Thus, if
	$\varphi = \varphi(p_{1}, \dots, p_{n})$, then there is a formula
	$\psi(p_{1}, \dots, p_{n}, q_{1}, \dots, q_{n}, s)
	\in \Fmlb$ such that
	$\Tb \varphi = \psi(\Box p_{1}, \dots, \Box p_{n},
	\Box \rsim p_{1}, \dots, \Box \rsim p_{n}, \rsim \bot)$. So we have
	\begin{multline*}
		\pi_{1}(h_{1}(v)(\Tb \varphi)) = \\
		=
		\pi_{1}(
			h_{1}(v)(
				\psi(
					\Box p_{1}, \dots, \Box p_{n},
					\Box \rsim p_{1}, \dots, \Box \rsim p_{n},
					\rsim \bot
				)
			)
		) = \\
		=
		\psi(
		\Box \pi_{1}(v(p_{1})), \dots, \Box \pi_{1}(v(p_{n})),
		\Box \pi_{2}(v(p_{1})), \dots, \Box \pi_{2}(v(p_{n})),
		1
		) = \\
		=
		\psi(
		\Box \Box \pi_{1}(v(p_{1})), \dots, \Box \Box \pi_{1}(v(p_{n})),
		\Box \Box \pi_{2}(v(p_{1})), \dots, \Box \Box \pi_{2}(v(p_{n})),
		1
		) = \\
		=
		\psi(
		\Box \pi_{1}(w(p_{1})), \dots, \Box \pi_{1}(w(p_{n})),
		\Box \pi_{2}(w(p_{1})), \dots, \Box \pi_{2}(w(p_{n})),
		1
		) = \\
		=
		\pi_{1}(
			h_{1}(w)(
				\psi(
					\Box p_{1}, \dots, \Box p_{n},
					\Box \rsim p_{1}, \dots, \Box \rsim p_{n},
					\rsim \bot
				)
			)
		) = \\
		=
		\pi_{1}(h_{1}(w)(\Tb \varphi))
		=
		\pi_{1}(h_{2}(w)(\varphi)) = 1_{\GB} = 1_{\bfB}. \qedhere
	\end{multline*}
	
\end{proof}

Now we recall the Kripke semantics for \Grz.

A {\em frame} is a pair $\mathcal{W} = \langle W, \leql \rangle$, where $W$ is a non-empty set of {\em worlds}, and $\leql$ is a partial ordering on $W$. 

A {\em model} over $\mathcal{W} = \langle W, \leql \rangle$ is a pair
$\mathcal{M} = \langle \mathcal{W}, v \rangle$, where
$v \colon \mathrm{Prop} \rarr 2^{W}$ is a {\em valuation} of propositional variables.

Given a model $\mathcal{M} = \langle \mathcal{W}, v \rangle$ over
$\mathcal{W} = \langle W, \leql \rangle$ we define the truth relation
between formulas of \Fmlb and worlds of $W$:
\begin{itemize}
	\item[--] $\mathcal{M}, x \models p \iff x \in v(p)$\,; 
	
	\item[--]  $\mathcal{M}, x \not\models \bot$\,; 
	
	\item[--]  $\mathcal{M}, x \models {\varphi \wedge \psi} 
	\iff 
	(\mathcal{M}, x \models \varphi \text{ and } \mathcal{M}, x \models \psi)$\,; 
	
	\item[--]  $\mathcal{M}, x \models {\varphi \vee \psi}
	\iff
	(\mathcal{M}, x \models \varphi \text{ or } \mathcal{M}, x \models \psi)$\,; 
	
	\item[--]  $\mathcal{M}, x \models {\varphi \rarr \psi}
	\iff
	(\mathcal{M}, x \models \varphi\ \Rightarrow\ \mathcal{M}, x \models \psi)$\,; 
	
	\item[--]  $\mathcal{M}, x \models {\Box \varphi} \iff  
	{\forall
		{y \in W}}\, \left( {x \leql y}\ \Rightarrow\ {\mathcal{M}, y \models \varphi} \right)$.
\end{itemize}
It is not hard to see that
\[
	\mathcal{M}, x \models {\Diamond \varphi} \iff  
	{\exists
		{y \in W}}\, \left( {x \leql y} \text{ and }
		{\mathcal{M}, y \models \varphi} \right).
\]

A formula $\varphi$ is {\em true} in a model $\mathcal{M}$,
$\mathcal{M} \models \varphi$, if $\mathcal{M}, x \models \varphi$ for all
$x \in W$; $\varphi$ is {\em valid} in a frame $\mathcal{W}$,
$\mathcal{W} \models \varphi$, if $\mathcal{M} \models \varphi$ for each model $\mathcal{M}$ over $\mathcal{W}$.

It is well-known (see, for example, \cite[Theorem\,5.51]{ChagrovZ}) that for every $\varphi \in \mathrm{Fm}_{\langu^{\Box}}$:
\[
	\varphi \in \Grz \iff \mathcal{W} \models \varphi \text{ for all finite 	frames $\mathcal{W}$}.
\]

\begin{lemma}\label{l323}
	
	\[
		\big[
			\Box(p \vee q)
			\wedge
			(\Box p \vee \Box \Diamond \neg p)
			\wedge
			(\Box q \vee \Box \Diamond \neg q)
		\big]
		\rarr (\Box p \vee \Box q) \in \Grz.
	\]
	
\end{lemma}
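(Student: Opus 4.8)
The plan is to invoke the Kripke completeness of \Grz recalled just above: since $\varphi \in \Grz$ if and only if $\mathcal{W} \models \varphi$ for every finite frame $\mathcal{W}$, it suffices to show that the displayed formula is valid in every finite frame. So I would fix a finite frame $\mathcal{W} = \langle W, \leql \rangle$, a model $\mathcal{M}$ over it, and a world $x \in W$, assume
\[
	\mathcal{M}, x \models \Box(p \vee q) \wedge (\Box p \vee \Box \Diamond \neg p) \wedge (\Box q \vee \Box \Diamond \neg q),
\]
and aim to derive $\mathcal{M}, x \models \Box p \vee \Box q$.

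I would argue by contradiction: suppose $\mathcal{M}, x \not\models \Box p$ and $\mathcal{M}, x \not\models \Box q$. Then in each of the conjuncts $\Box p \vee \Box \Diamond \neg p$ and $\Box q \vee \Box \Diamond \neg q$ the first disjunct fails, so $\mathcal{M}, x \models \Box \Diamond \neg p$ and $\mathcal{M}, x \models \Box \Diamond \neg q$. Unwinding the truth clauses for $\Box$ and $\Diamond$, together with the fact that $\mathcal{M}, z \models \neg r$ iff $\mathcal{M}, z \not\models r$, this says: for every $y$ with $x \leql y$ there is $z$ with $y \leql z$ and $\mathcal{M}, z \not\models p$, and symmetrically for $q$.

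Now finiteness enters. The set $\{ y \in W \mid x \leql y \}$ is finite and non-empty, hence has an element $m$ that is $\leql$-maximal in it. Applying $\mathcal{M}, x \models \Box \Diamond \neg p$ at $y := m$ yields $z$ with $m \leql z$ and $\mathcal{M}, z \not\models p$; by maximality of $m$ and antisymmetry of $\leql$ we get $z = m$, so $\mathcal{M}, m \not\models p$. The same argument with $\Box \Diamond \neg q$ gives $\mathcal{M}, m \not\models q$. Hence $\mathcal{M}, m \not\models p \vee q$; but $x \leql m$ and $\mathcal{M}, x \models \Box(p \vee q)$ force $\mathcal{M}, m \models p \vee q$, a contradiction. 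Therefore $\mathcal{M}, x \models \Box p \vee \Box q$, which completes the verification that the formula is valid in every finite frame and so belongs to \Grz.

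The proof is short, and the single essential ingredient beyond routine unwinding of the semantics is the choice of a $\leql$-maximal point above $x$ — this is precisely the place where finiteness of the frame (equivalently, the availability of the Grzegorczyk axiom rather than merely \Sf) is used, and it is the step I would flag as the crux, even though it presents no real difficulty here.
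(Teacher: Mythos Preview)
Your proof is correct and follows essentially the same approach as the paper: both use Kripke completeness of \Grz with respect to finite partially ordered frames, argue by contradiction assuming $\Box p$ and $\Box q$ both fail at $x$, pass to a $\leql$-maximal world above $x$, and derive a contradiction with $\Box(p \vee q)$. The paper's argument is slightly terser in that it moves directly from $\Diamond \neg p$ at the maximal world to $\neg p$ there, but the content is identical to yours.
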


\begin{proof}
	
	Let $\mathcal{W}$ be an arbitrary finite frame, $\mathcal{M}$ be a model
	over $\mathcal{W}$ and $x \in W$.
	
	We need to show that if
	\begin{enumerate}
		
		\item $\mathcal{M}, x \models \Box(p \vee q)$,
		
		\item $\mathcal{M}, x \models \Box p \vee \Box \Diamond \neg p$ and
		
		\item $\mathcal{M}, x \models \Box q \vee \Box \Diamond \neg q$,
		
	\end{enumerate}
	then $\mathcal{M}, x \models \Box p \vee \Box q$.
	
	Suppose that $\mathcal{M}, x \not\models \Box p $ and
	$\mathcal{M}, x \not\models \Box q $. In view of (2) and (3)
	it means that
	$\mathcal{M}, x \models \Box \Diamond \neg p$ and
	$\mathcal{M}, x \models \Box \Diamond \neg q$.
	
	There is a maximal $y \in W$ w.r.t. \leql such that
	$x \leql y$, because $W$ is finite. From
	$\mathcal{M}, x \models \Box \Diamond \neg p$ and
	$\mathcal{M}, x \models \Box \Diamond \neg q$ we get that
	$\mathcal{M}, y \models \Diamond \neg p$ and
	$\mathcal{M}, y \models \Diamond \neg q$.
	
	From the maximality of $y$ we have
	$\mathcal{M}, y \models \neg p $ and
	$\mathcal{M}, y \models \neg q $. But we also have
	$\mathcal{M}, y \models p \vee q $ because of (1): a contradiction.
	Thus, we have proved
	$\mathcal{M}, x \models \Box p \vee \Box q$. \qedhere
	
\end{proof}

\begin{lemma}\label{l324}
	
	Suppose that $\bfB \models \Grz$ and $\GsB = \Lnabla$. Then
	$(\Box a, \Box b) \in \bfT$ for all $(a, b) \in \bfT$.
	
\end{lemma}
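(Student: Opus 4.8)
The plan is to check directly that the pair $(\Box a,\Box b)$ satisfies the two conditions defining $\bfT=\TwBND$, namely $\Box a\vee\Box b\in\nabla$ and $\Box a\wedge\Box b\in\Delta$, under the assumption that $(a,b)\in\bfT$, i.e. $a\vee b\in\nabla$ and $a\wedge b\in\Delta$.

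The ideal condition is immediate: by the \TBA axioms $\Box a\wedge\Box b=\Box(a\wedge b)\leql a\wedge b$, and since $a\wedge b\in\Delta$ and $\Delta$ is an ideal, hence downward closed, we get $\Box a\wedge\Box b\in\Delta$.

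For the filter condition, first note that $\Box a$ and $\Box b$ are open elements of $\bfB$, because $\Box\Box c=\Box c$; so the hypothesis $\GsB=\Lnabla$ applies to them and yields $\Box a\vee\Box\neg\Box a\in\nabla$ and $\Box b\vee\Box\neg\Box b\in\nabla$. Since $\bfB$ is Boolean we have $\neg\Box c=\neg\Box\neg\neg c=\Diamond\neg c$, so, applying $\Box$, $\Box\neg\Box a=\Box\Diamond\neg a$ and $\Box\neg\Box b=\Box\Diamond\neg b$; thus $\Box a\vee\Box\Diamond\neg a\in\nabla$ and $\Box b\vee\Box\Diamond\neg b\in\nabla$. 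Moreover $a\vee b\in\nabla$ and $\nabla$ is an \emph{open} filter, so $\Box(a\vee b)\in\nabla$. Now invoke Lemma~\ref{l323}: since $\bfB\models\Grz$, the formula there is valid in $\bfB$, and evaluating it at $p:=a$, $q:=b$ (and using that in a Heyting algebra $x\rarr y=1$ iff $x\leql y$) gives
\[
	\Box(a\vee b)\wedge(\Box a\vee\Box\Diamond\neg a)\wedge(\Box b\vee\Box\Diamond\neg b)\leql\Box a\vee\Box b .
\]
The left-hand side is a finite meet of elements of the filter $\nabla$, hence lies in $\nabla$; since $\nabla$ is upward closed, $\Box a\vee\Box b\in\nabla$. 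Combining this with the ideal condition, $(\Box a,\Box b)\in\bfT$.

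The only nonroutine ingredient is recognizing which theorem of $\Grz$ to use, i.e. Lemma~\ref{l323}: its point is precisely that the ``weak excluded middle'' facts $\Box c\vee\Box\Diamond\neg c\in\nabla$ supplied by $\GsB=\Lnabla$, together with $\Box(a\vee b)\in\nabla$, force $\Box a\vee\Box b\in\nabla$, and that this implication is Grzegorczyk-valid (provable on finite frames by passing to a maximal world above $x$). Once that lemma is available, the rest of the argument is bookkeeping with the closure properties of $\nabla$ and $\Delta$ and the Boolean identity $\neg\Box c=\Diamond\neg c$.
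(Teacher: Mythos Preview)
Your proof is correct and follows essentially the same approach as the paper: verify $\Box a\wedge\Box b\in\Delta$ by downward closure, then use $\GsB=\Lnabla$ to obtain $\Box a\vee\Box\Diamond\neg a,\ \Box b\vee\Box\Diamond\neg b\in\nabla$, combine with $\Box(a\vee b)\in\nabla$ via openness of $\nabla$, and apply Lemma~\ref{l323} to conclude $\Box a\vee\Box b\in\nabla$. The only cosmetic difference is that you spell out the filter reasoning and the passage from validity of the $\Grz$-formula to the algebraic inequality a bit more explicitly.
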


\begin{proof}
	
	Suppose that $(a, b) \in \bfT$. Then $a \vee b \in \nabla$ and
	$a \wedge b \in \Delta$. We can easily show that
	$\Box a \wedge \Box b \in \Delta$:
	\[
		\Box a \wedge \Box b \leql a \wedge b \in \Delta.
	\]
	
	Since
	$\nabla$ is an open filter, $\Box (a \vee b) \in \nabla$ holds.
	We have $\Box a \vee \Box \neg \Box a = \Box a \vee \Box \Diamond
	\neg a$
	and
	$\Box b \vee \Box \neg \Box b = \Box b \vee \Box \Diamond \neg b$,
	so in view of $\GsB = \Lnabla$
	\[
		\Box(a \vee b)
		\wedge
		(\Box a \vee \Box \Diamond \neg a)
		\wedge
		(\Box b \vee \Box \Diamond \neg b) \in \nabla.
	\]
	From $\bfB \models \Grz$ and Lemma~\ref{l323} we obtain that
	$\Box a \vee \Box b \in \nabla$. Since
	$\Box a \wedge \Box b \in \Delta$, we get
	$(\Box a, \Box b) \in \bfT$. \qedhere
	
\end{proof}

\begin{theorem}\label{t325}
	
	Suppose that $\bfB \models \Grz$ and $\GsB = \Lnabla$. Then for every
	$\varphi \in \Fmls$
	\[
		\GT \models \varphi \iff \bfT \models \Tb \varphi.
	\]
	
\end{theorem}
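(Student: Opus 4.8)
The plan is to deduce the theorem immediately by chaining together the two main results already proved in this subsection. The hypotheses $\bfB \models \Grz$ and $\GsB = \Lnabla$ are precisely the assumptions of Lemma~\ref{l324}, so I would first invoke that lemma to conclude that $(\Box a, \Box b) \in \bfT$ for every $(a, b) \in \bfT$. But this is exactly the hypothesis of Proposition~\ref{p322}. Applying Proposition~\ref{p322}, I obtain at once that $\GsB = \GmT = \Lnabla$ (so in particular $\GmT = \Lnabla$, which guarantees that the algebra of open pairs $\GT$ is well-defined), together with the equivalence $\GT \models \varphi \iff \bfT \models \Tb \varphi$ for every $\varphi \in \Fmls$. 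This is the assertion to be proved, so nothing further is required.

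Accordingly, I do not expect a genuine obstacle here: the substantive work has been done in the preceding lemmas. Lemma~\ref{l323} isolates the role of the Grzegorczyk axiom through a short finite-frame argument, and Lemma~\ref{l324} uses it, via the translation $\Box a \vee \Box \neg \Box a = \Box a \vee \Box \Diamond \neg a$ and the identification $\GsB = \Lnabla$, to push $\Box$ through pairs of $\bfT$. Proposition~\ref{p322} then transfers validity along $\Tb$: the easy direction uses the inductive Lemma~\ref{l321} ($\pi_1 h_1(v)(\Tb\varphi) = \pi_1 h_2(v)(\varphi)$), and the harder direction rewrites $\Tb\varphi$ in the form $\psi(\Box p_1,\dots,\Box p_n,\Box{\sim}p_1,\dots,\Box{\sim}p_n,{\sim}\bot)$ and passes to the ``boxed'' valuation $w(p) := (\Box\pi_1(v(p)),\Box\pi_2(v(p)))$, which lands in $\GsT$ precisely because $(\Box a,\Box b)\in\bfT$ whenever $(a,b)\in\bfT$. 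With all of this in place, the theorem is a one-line corollary, and the only thing worth writing in the proof is the explicit citation of Lemma~\ref{l324} followed by Proposition~\ref{p322}.
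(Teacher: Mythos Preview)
Your proposal is correct and matches the paper's own proof exactly: the paper simply states that the theorem is an immediate consequence of Lemma~\ref{l324} and Proposition~\ref{p322}. Your additional commentary on how those earlier results fit together is accurate but goes beyond what the paper records.
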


\begin{proof}
	
	This is an immediate consequence of Lemma~\ref{l324} and
	Proposition~\ref{p322}. \qedhere
	
\end{proof}

\subsection{From \Nf-twist-structures to \Bsf-twist-structures}\label{ss33}

In this subsection given a twist-structure \clA over a Heyting algebra we construct a twist-structure over a \TBA such that \clA is a subalgebra of its algebra of open pairs.

We fix an arbitrary Heyting algebra
$\bfA = \langle A; \vee_{\bfA}, \wedge_{\bfA}, \rarr_{\bfA}, \bot_{\bfA} \rangle$ with the greatest element $1_{\bfA}$, and we will search for a twist-structure over a \TBA $\bfB := s(\bfA)$. We write the operations of \bfA and \bfB without subscripts if it does not lead to a confusion. In what follows we always write $\neg$ for the negation operation of \bfA and $\neg_{\bfB}$ for the negation operation of \bfB.

It is well-known (see, for example, \cite[pp.~61--62]{GabMaks05}) that there is a bijective correspondence between open filters of \bfB and filters of \GB given by the following mutually inverse mappings:
\begin{align*}
	\delta & \colon \nabla \in \FbB
	\longmapsto
	\delta(\nabla) = \nabla \cap \GsB \in \mathcal{F}(\GB) \\
	\rho & \colon \nabla' \in \mathcal{F}(\GB)
	\longmapsto
	\rho(\nabla') = \{ x \in B \mid \Box x \in \nabla' \} \in \FbB
\end{align*}

For each $\Delta \in \IA$ there is the least closed ideal of the \TBA \bfB containing $\Delta$, which can be defined as
\[
	\sigma(\Delta) := \{ x \in B \mid x \leql_{\bfB} \Diamond y
	\text{ for some $y \in \Delta$} \}.
\]
It is easy to check that $\sigma(\Delta) \in \IdiB$, and it is obvious that $\sigma(\Delta)$ is contained in each closed ideal of \bfB containing
$\Delta$.

We call an ideal $\Delta$ of a Heyting algebra \bfA\ {\em closed}, if $\neg \neg a \in \Delta$ for all $a \in \Delta$. Not each ideal of \bfA is closed. For instance, an ideal $\{ x \in A \mid x \leql_{\bfA} a \}$ with $a \not= \neg\neg a$ is not closed. We can see from Lemma~\ref{l313} that the ideal
\Delg of the algebra \GT of open pairs is closed.

It is not hard to check that for $\Delta \in \IA$ the set
\[
	N(\Delta) := \{ a \in A \mid 
		a \leql_{\bfA} \neg \neg b \text{\;  for some $b \in \Delta$} \}.
\]
is a closed ideal of \bfA, and it is a subset of each closed ideal of \bfA containing $\Delta$, so it is the least closed ideal of \bfA containing
$\Delta$. We call this set the {\em closure of} $\Delta$.

\begin{lemma}\label{l331}
	
	$\sigma(\Delta) \cap A = N(\Delta)$.
	
\end{lemma}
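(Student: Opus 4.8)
The plan is to prove the two inclusions $\sigma(\Delta) \cap A \subseteq N(\Delta)$ and $N(\Delta) \subseteq \sigma(\Delta) \cap A$ separately, working inside the \TBA $\bfB = s(\bfA)$ and using that $A = \GsB$ (since $\bfB = s(\bfA)$ means $\GB = \bfA$, so the open elements of $\bfB$ are exactly the elements of $A$). Throughout I would keep in mind the two descriptions
\[
	\sigma(\Delta) = \{ x \in B \mid x \leql_{\bfB} \Diamond y
		\text{ for some } y \in \Delta \},
	\quad
	N(\Delta) = \{ a \in A \mid a \leql_{\bfA} \neg\neg b
		\text{ for some } b \in \Delta \},
\]
and the computation already recorded in the proof of Lemma~\ref{l313}, namely that for an open element $y$ one has $\Box \Diamond y = \neg_{\bfB}\neg_{\bfB} y = \neg\neg y$ (the last equality because $\neg_{\GB}\neg_{\GB} y = \Box(\neg_{\bfB}\Box(\neg_{\bfB} y)) = \Box\neg_{\bfB}\Box\neg_{\bfB} y$, and for open $y$ this is $\Box\Diamond y$; more carefully, I would simply invoke that $\neg$ on $\bfA = \GB$ is $\neg_{\GB}$ and expand the definition of $\rarr_{\GB}$).

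For the inclusion $\sigma(\Delta) \cap A \subseteq N(\Delta)$: take $a \in \sigma(\Delta) \cap A$, so $a \in \GsB$ and $a \leql_{\bfB} \Diamond y$ for some $y \in \Delta$. Since $a$ is open, $a = \Box a \leql_{\bfB} \Box \Diamond y$. Now $y \in \Delta \subseteq A = \GsB$, so $y$ is open, and $\Box\Diamond y = \Box\neg_{\bfB}\Box\neg_{\bfB} y = \neg_{\GB}\neg_{\GB} y = \neg\neg y$ (this is exactly the identity used in Lemma~\ref{l313}). Hence $a \leql_{\bfB} \neg\neg y$; since both $a$ and $\neg\neg y$ lie in $\GsB$ and $\leql_{\GB}$ is the restriction of $\leql_{\bfB}$, we get $a \leql_{\bfA} \neg\neg y$ with $y \in \Delta$, i.e.\ $a \in N(\Delta)$.

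For the reverse inclusion $N(\Delta) \subseteq \sigma(\Delta) \cap A$: take $a \in N(\Delta)$, so $a \in A$ and $a \leql_{\bfA} \neg\neg b$ for some $b \in \Delta$. Then $a \in \GsB$ is automatic. By the same identity, $\neg\neg b = \neg_{\GB}\neg_{\GB} b = \Box\Diamond b \leql_{\bfB} \Diamond b$, and $a \leql_{\bfA} \neg\neg b$ gives $a \leql_{\bfB} \neg\neg b \leql_{\bfB} \Diamond b$ with $b \in \Delta$; hence $a \in \sigma(\Delta)$, and so $a \in \sigma(\Delta) \cap A$.

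\textbf{Main obstacle.} The only genuinely delicate point is making the identification $\Box\Diamond y = \neg\neg y$ for open $y$ completely precise — one must be careful that $\neg$ denotes $\neg_{\bfA} = \neg_{\GB}$, not $\neg_{\bfB}$, and that $\neg_{\GB} z = \Box(z \rarr_{\bfB} \bot_{\bfB}) = \Box\neg_{\bfB} z$ by the definition of the implication in $\GB$, so that $\neg_{\GB}\neg_{\GB} y = \Box\neg_{\bfB}\Box\neg_{\bfB} y = \Box\Diamond y$ (using $\Box z \leql_{\bfB} z$ and idempotency of $\Box$ on open elements). Everything else is a routine translation between $\leql_{\bfB}$ and $\leql_{\bfA}$ using $A = \GsB$ and the fact that the Heyting order on $\GB$ is inherited from $\bfB$; no appeal to $\Delta$ being closed is needed for this lemma (that is used elsewhere).
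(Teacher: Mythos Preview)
Your proposal is correct and follows essentially the same route as the paper: both directions use the identity $\Box\Diamond y = \neg_{\GB}\neg_{\GB} y$ for $y \in A = \GsB$, applying $\Box$ to $a \leql_{\bfB} \Diamond y$ for $\subseteq$ and using $\Box\Diamond b \leql_{\bfB} \Diamond b$ for $\supseteq$. The paper's proof is terser (it writes $\Box\Diamond y = \neg\neg y$ without the intermediate unpacking), but the argument is the same; your parenthetical about ``$\Box z \leql_{\bfB} z$ and idempotency'' in the obstacle section is unnecessary since $\Box\Diamond y = \Box\neg_{\bfB}\Box\neg_{\bfB} y$ holds on the nose from the definition of $\Diamond$.
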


\begin{proof}
	
	$\subseteq$. If $x \in \sigma(\Delta) \cap A$, then $\Box x = x$ and
	$x \leql_{\bfB} \Diamond y$ for some $y \in \Delta$. So
	$x = \Box x \leql_{\bfB} \Box \Diamond y = \neg \neg y$, which means
	$x \in N(\Delta)$.
	
	$\supseteq$. Let $x \in N(\Delta)$. Then $x \in A$ and $x \leql_{\bfA}
	\neg \neg y$ for some $y \in \Delta$. So
	$x \leql_{\bfB} \Box \Diamond y \leql_{\bfB} \Diamond y$,
	which means $x \in \sigma(\Delta)$. \qedhere
	
\end{proof}

Now we fix $\nabla \in \FdA$ and $\Delta \in \IA$.

\begin{theorem}\label{t332}
	
	Let $\bfT = Tw(\bfB, \rho(\nabla), \sigma(\Delta))$. Then for
	every $\varphi \in \Fmls$
	\[
		\bfT \models \Tb \varphi
		\iff
		Tw(\bfA, \nabla, N(\Delta)) \models \varphi.
	\]
	
\end{theorem}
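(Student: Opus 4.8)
The plan is to apply Theorem~\ref{t325} to the specific {\TBA} $\bfB = s(\bfA)$ and the twist-structure $\bfT = Tw(\bfB, \rho(\nabla), \sigma(\Delta))$, and then to identify the algebra of open pairs $\GT$ with $Tw(\bfA, \nabla, N(\Delta))$. To invoke Theorem~\ref{t325} I must check its two hypotheses: that $\bfB \models \Grz$ and that $\GsB = \Lnabla$ where, by the conventions of Subsection~3.2, $\nabla$ in that subsection is the open filter $\rho(\nabla)$ of $\bfB$. The first is immediate, since $s(\bfA)$ is an {\Sf}-algebra of Grzegorczyk type by construction (its open elements generate it, so it satisfies $\Grz$); if the excerpt has not recorded this explicitly, it is a standard fact about $s(\bfA)$ and I would cite it. For the second hypothesis I need to show $x \vee \Box \neg_{\bfB} x \in \rho(\nabla)$ for every open $x \in \GsB$, i.e., $\Box(x \vee \Box\neg_{\bfB}x) \in \nabla$; since $x$ is open and $\Box\neg_{\bfB}x$ is open this reduces to $x \vee \Box\neg_{\bfB}x \in \nabla$. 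Identifying $\GB$ with $\bfA$ and translating $\Box\neg_{\bfB}x$ into the Heyting negation, $\Box\neg_{\bfB}x = x \rarr_{\GB} \bot = \neg_{\bfA}x$ in $\bfA$, so the condition becomes $x \vee \neg_{\bfA}x \in \nabla$ for all $x \in A$; but $x\vee\neg x$ is a dense element of $\bfA$ and $\nabla \in \FdA$ contains $F_d(\bfA)$, so this holds. Hence $\GsB = \Lnabla$.

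With both hypotheses verified, Theorem~\ref{t325} gives, for every $\varphi \in \Fmls$,
\[
	\GT \models \varphi \iff \bfT \models \Tb\varphi.
\]
So it remains to prove $\GT = Tw(\bfA, \nabla, N(\Delta))$. By the Proposition in Subsection~3.1 (applicable because $\GmT = \Lnabla$ follows from $\GsB = \GmT = \Lnabla$, which Proposition~\ref{p322} delivered en route, or directly from $\GsB=\Lnabla$ and Lemma~\ref{l311}.\ref{l3112}), we know $\GT = Tw(\boldsymbol{\Gamma}(\bfT), \nabg, \Delg)$, and $\boldsymbol{\Gamma}(\bfT) = \GmT = \GsB \cong \bfA$. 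Since twist-structures over a fixed Heyting algebra are determined by their invariants, it suffices to check $\nabg = \nabla$ and $\Delg = N(\Delta)$ under this identification. For the filter: Lemma~\ref{l312}.1 gives $\nabg = \rho(\nabla) \cap \GsB = \delta(\rho(\nabla)) = \nabla$, using that $\delta$ and $\rho$ are mutually inverse. For the ideal: Lemma~\ref{l312}.2 gives $\Delg = \sigma(\Delta) \cap \GsB = \sigma(\Delta) \cap A = N(\Delta)$ by Lemma~\ref{l331}. This closes the argument.

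I expect the only real subtlety to be bookkeeping: making sure the ``$\nabla$'' and ``$\Delta$'' appearing as local parameters in Subsection~3.2 (an open filter and a closed ideal of $\bfB$) are correctly instantiated as $\rho(\nabla)$ and $\sigma(\Delta)$, and that the Heyting algebra $\GB$ is correctly identified with the given $\bfA$ via $s(\bfA)$ (so that $\Box\neg_{\bfB}$ restricted to open elements is $\neg_{\bfA}$, and $\GsB = A$). Once these identifications are pinned down, the Grzegorczyk condition is routine, the density argument for $\GsB = \Lnabla$ is a one-liner using $\nabla \in \FdA$, and the computation of the invariants is exactly Lemmas~\ref{l312} and~\ref{l331}. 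The main obstacle, if any, is verifying $\bfB = s(\bfA) \models \Grz$ cleanly; I would handle it by recalling that in $s(\bfA)$ every element $\Box$-reduces into $\GsB$ and $s(\bfA)$ is generated by $\GsB$, whence $s(\bfA)$ is (isomorphic to) the free Boolean closure and validates the Grzegorczyk axiom — or simply cite \cite[Theorem\,3.8]{GabMaks05} together with the fact that the $s$-construction yields Grzegorczyk algebras.
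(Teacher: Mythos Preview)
Your proposal is correct and follows essentially the same route as the paper: verify $\bfB = s(\bfA) \models \Grz$ (the paper cites \cite[Corollary\,3.5.14]{EsakiaDual} for this), check $\GsB = \Lambda(\bfB,\rho(\nabla))$ via the density argument using $\nabla \in \FdA$, apply Theorem~\ref{t325}, and then compute the invariants of $\GT$ using Lemmas~\ref{l312} and~\ref{l331}. The only cosmetic difference is that the paper uses the inclusion $\nabla \subseteq \rho(\nabla)$ directly rather than unfolding $\rho(\nabla)$ as you do.
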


\begin{proof}
	
	We know that $\nabla \in \FdA$, so for every $a \in A$ we have
	$a \vee \neg a =
	a \vee \Box \neg_{\bfB} a \in \nabla \subseteq \rho(\nabla)$.
	In view of $A = \GsB$ we obtain $\GsB =
	\Lambda(\bfB, \rho(\nabla))$.
	
	It is well-known that if
	$\mathbf{C}$ is a \TBA such that
	$\mathbf{C}^{s} = \mathbf{C}$, then $\mathbf{C} \models \Grz$
	(see, for example, \cite[Corollary\,3.5.14]{EsakiaDual}).
	So $\bfB = s(\bfA) \models \Grz$.
	
	Now, let us show that $\GT = Tw(\bfA, \nabla, N(\Delta))$. Firstly,
	$A = \GsB = \GmT = \Lambda(\bfB, \rho(\nabla))$,
	because $\GsB = \Lambda(\bfB, \rho(\nabla))$. So
	\GT is a twist-structure over \bfA. By Lemma~\ref{l312}
	we know that the invariants of this twist-structure are
	$\rho(\nabla) \cap A
	 = \delta(\rho(\nabla)) = \nabla$ and $\sigma(\Delta) \cap A =
	 N(\Delta)$ (by Lemma~\ref{l331}). Thus,
	$\GT = Tw(\bfA, \nabla, N(\Delta))$.
	
	And now, the conclusion of the theorem follows from
	$\bfB \models \Grz$, $\GsB = \Lambda(\bfB,
	\rho(\nabla))$ and Theorem~\ref{t325}. \qedhere
	
\end{proof}

From Lemma~\ref{l313} and the proof of Theorem~\ref{t332} we infer

\begin{corollary}
	
	A twist-structure over a Heyting algebra can be represented as the
	algebra of open pairs of some twist-structure over a \TBA if and only if
	its ideal is closed.
	
\end{corollary}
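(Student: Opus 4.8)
The plan is to establish the two implications of the equivalence separately; in each case the work has essentially been done already, so no fresh computation should be required.

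For the ``only if'' direction I would argue as follows. Suppose a twist-structure $\clB$ over a Heyting algebra is the algebra of open pairs $\GT$ of some twist-structure $\bfT = \TwBND$ over a \TBA \bfB. The very phrase ``algebra of open pairs'' presupposes the side condition $\GmT = \Lnabla$, so the Proposition immediately preceding the definition of that notion applies and gives $\GT = Tw(\boldsymbol{\Gamma}(\bfT), \nabg, \Delg)$. Since a twist-structure is determined by its invariants, the ideal of $\clB = \GT$ is exactly $\Delg$, and Lemma~\ref{l313} states that $\neg_{\GB}\neg_{\GB} a \in \Delg$ for every $a \in \Delg$, i.e. that $\Delg$ is a closed ideal. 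Hence the ideal of $\clB$ is closed.

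For the ``if'' direction I would start from a twist-structure $\clB$ over a Heyting algebra $\bfA$ whose ideal $\Delta := \Delta(\clB)$ is closed, and set $\nabla := \nabla(\clB)$. By the facts about invariants of twist-structures recalled earlier, $\nabla \in \FdA$, $\Delta \in \IA$ and $\clB = Tw(\bfA, \nabla, \Delta)$. Since $a \leql \neg\neg a$ for all $a \in A$ one has $\Delta \subseteq N(\Delta)$, while $N(\Delta)$ is by construction the least closed ideal of \bfA containing $\Delta$, whence $N(\Delta) \subseteq \Delta$; thus $N(\Delta) = \Delta$. Now take $\bfB := s(\bfA)$ and $\bfT := Tw(\bfB, \rho(\nabla), \sigma(\Delta))$. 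The proof of Theorem~\ref{t332} shows $\GsB = \Lambda(\bfB, \rho(\nabla))$ and $\bfB \models \Grz$, hence $\GmT = \GsB = \Lambda(\bfB, \rho(\nabla))$ — so the algebra of open pairs $\GT$ is well defined — and moreover $\GT = Tw(\bfA, \nabla, N(\Delta))$. Using $N(\Delta) = \Delta$ this becomes $\GT = Tw(\bfA, \nabla, \Delta) = \clB$, so $\clB$ is the algebra of open pairs of $\bfT$.

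I do not expect a genuine obstacle: the statement really is a corollary. The only things to watch are bookkeeping points. First, one must read the side condition $\GmT = \Lnabla$ as part of the definition of ``algebra of open pairs'', so that Lemma~\ref{l313} is available in the first direction; and it is automatically met in the second direction because $\bfB = s(\bfA) \models \Grz$ together with $\GsB = \Lambda(\bfB, \rho(\nabla))$. Second, the one conceptual observation is that closedness of $\Delta$ is precisely the identity $N(\Delta) = \Delta$ that converts the equality $\GT = Tw(\bfA, \nabla, N(\Delta))$, obtained inside the proof of Theorem~\ref{t332}, into $\GT = \clB$.
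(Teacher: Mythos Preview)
Your proposal is correct and follows exactly the approach indicated in the paper, which simply says the corollary is inferred from Lemma~\ref{l313} and the proof of Theorem~\ref{t332}. You have made explicit precisely the two ingredients the paper names: Lemma~\ref{l313} gives the ``only if'' direction, and the equality $\GT = Tw(\bfA, \nabla, N(\Delta))$ established inside the proof of Theorem~\ref{t332}, together with the observation that closedness of $\Delta$ means $N(\Delta) = \Delta$, gives the ``if'' direction.
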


\section{Modal companions}

\subsection{Logics with modal companions}

An easy corollary of Theorem~\ref{t332} is the following

\begin{proposition}\label{p411}
	
 	If $L \in \ENf$ has a proper model with closed ideal, then $\taub L$ is
 	a modal companion of $L$.
	
\end{proposition}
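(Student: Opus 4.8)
The statement to prove is Proposition~\ref{p411}: if $L \in \ENf$ has a proper model \clA with closed ideal, then $\taub L$ is a modal companion of $L$. The key ingredient available to us is Theorem~\ref{t332}, which says that for a Heyting algebra \bfA, a filter $\nabla \in \FdA$ and an ideal $\Delta \in \IA$, the twist-structure $\bfT = Tw(\bfB, \rho(\nabla), \sigma(\Delta))$ over $\bfB = s(\bfA)$ satisfies, for every $\varphi \in \Fmls$, that $\bfT \models \Tb\varphi$ iff $Tw(\bfA, \nabla, N(\Delta)) \models \varphi$. So the first step is to unpack what a proper model of $L$ with closed ideal is: it is a twist-structure $\clA = Tw(\bfA, \nabla, \Delta)$ over some Heyting algebra \bfA, with invariants $\nabla = \nabla(\clA) \in \FdA$ and $\Delta = \Delta(\clA) \in \IA$, such that $\mathrm{L}\clA = L$ and $\Delta$ is closed, i.e.\ $\neg\neg a \in \Delta$ for all $a \in \Delta$. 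When $\Delta$ is closed we have $N(\Delta) = \Delta$, so $Tw(\bfA, \nabla, N(\Delta)) = Tw(\bfA, \nabla, \Delta) = \clA$ itself.

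\textbf{Key steps.} First I would form $\bfT := Tw(s(\bfA), \rho(\nabla), \sigma(\Delta))$ and apply Theorem~\ref{t332}: since $N(\Delta) = \Delta$, we get for every $\varphi \in \Fmls$ that $\bfT \models \Tb\varphi \iff \clA \models \varphi$, i.e.\ $\bfT \models \Tb\varphi \iff \varphi \in \mathrm{L}\clA = L$. Next, since $\bfT$ is a twist-structure over a \TBA, Theorem~2.1(4) tells us $\mathrm{L}\bfT \in \EBsf$, so $M := \mathrm{L}\bfT$ is a \Bsf-extension. The equivalence just established says precisely that $\Tb\varphi \in M \iff \varphi \in L$ for all $\varphi \in \Fmls$, which is exactly the definition of $M$ being a modal companion of $L$. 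Finally, the last sentence of Subsection~2.3 notes that whenever $L \in \ENf$ has some modal companion, $\taub L$ is the \emph{least} one; since $M$ is a modal companion, $\taub L$ is a modal companion of $L$, completing the proof.

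\textbf{Main obstacle.} The proof is essentially a direct corollary, so there is no deep obstacle — the real content has already been discharged in Theorem~\ref{t332} and in the machinery of Section~3. The one point requiring care is the identification $N(\Delta) = \Delta$ for a closed ideal $\Delta$: one must check that the definition of $N(\Delta)$ as $\{a \in A \mid a \leql \neg\neg b$ for some $b \in \Delta\}$ collapses to $\Delta$ exactly when $\Delta$ is closed. The inclusion $\Delta \subseteq N(\Delta)$ is immediate from $a \leql \neg\neg a$; for the reverse, if $a \leql \neg\neg b$ with $b \in \Delta$, then closedness gives $\neg\neg b \in \Delta$, and since ideals are downward closed, $a \in \Delta$. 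The other small point is to confirm that a ``proper model of $L$'' in the language \langus is by definition a twist-structure with $\mathrm{L}\clA = L$, so that its invariants $\nabla(\clA), \Delta(\clA)$ are the $\nabla, \Delta$ to which Theorem~\ref{t332} applies — and that $\nabla(\clA) \in \FdA$, $\Delta(\clA) \in \IA$ hold automatically by the general facts about twist-structure invariants recalled in Subsection~2.2.
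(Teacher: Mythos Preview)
Your proposal is correct and follows essentially the same route as the paper: form $\bfT = Tw(s(\bfA), \rho(\nabla), \sigma(\Delta))$, use closedness of $\Delta$ to get $N(\Delta)=\Delta$, and apply Theorem~\ref{t332}. The only cosmetic difference is in the last step: the paper argues directly that $\bfT \models \taub L$ and hence $\Tb\varphi \notin \taub L$ whenever $\varphi \notin L$, whereas you first observe that $\mathrm{L}\bfT$ is a modal companion and then invoke the remark that $\taub L$ is the least one; both conclusions are immediate and equivalent.
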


\begin{proof}
	
	Let $Tw(\bfA, \nabla, \Delta)$ be a proper model of $L$, and
	assume that $N(\Delta) = \Delta$. Let \bfT denote
	$Tw(s(\bfA), \rho(\nabla), \sigma(\Delta))$. By Theorem~\ref{t332},
	for every $\varphi \in \Fmls$ we have
	\[
		\bfT \models \Tb \varphi
		\iff
		Tw(\bfA, \nabla, \Delta) \models \varphi.
	\]
	
	So $\bfT \models \taub L$, and if $\varphi \not\in L$, then
	$\bfT \not\models \Tb \varphi$ implies $\Tb \varphi \not\in
	\taub L$. \qedhere
	
\end{proof}

We recall that the Lindenbaum twist-structure \WL over the Heyting algebra \TL is a proper model of $L \in \ENf$, and its invariants are
\[
	\nablaL = \{ [\varphi \vee \rsim \varphi]_{L} \mid 
			\varphi \in \Fmls \}
\]
and
\[
	\DeltaL = \{ [\varphi \wedge \rsim \varphi]_{L} \mid 
			\varphi \in \Fmls \}.
\]
In view of the previous proposition we are curious for which $L \in \ENf$ the ideal \DeltaL is closed.

\begin{proposition}
	
	Let $L \in \ENf$. Then
	\[
	\DeltaL \text{ is closed }
	\iff
	\neg \neg (p \wedge \rsim p) \leftrightarrow (p \wedge \rsim p) \in L.
	\] 
	
\end{proposition}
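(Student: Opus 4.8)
Recall that $\DeltaL = \{[\varphi\wedge\rsim\varphi]_L \mid \varphi\in\Fmls\}$ is an ideal of $\TL$, and that by the very definition of $\TL$ one has $[\alpha]_L = [\beta]_L$ exactly when $\alpha\leftrightarrow\beta\in L$. Hence the right-hand side of the asserted equivalence is nothing but the statement that $\neg\neg d_{0} = d_{0}$ in $\TL$, where $d_{0} := [p\wedge\rsim p]_L$; and, since $L$ is closed under $(\mathrm{SUB})$ and every element of $\DeltaL$ has the form $[\varphi\wedge\rsim\varphi]_L$, this is in turn equivalent to: $\neg\neg d = d$ for all $d\in\DeltaL$. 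So the plan is to prove that $\DeltaL$ is closed if and only if every element of $\DeltaL$ is a regular element of $\TL$.

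The implication $\Leftarrow$ is immediate: if $\neg\neg d = d$ for every $d\in\DeltaL$, then in particular $\neg\neg d = d\in\DeltaL$, so $\DeltaL$ is closed. For $\Rightarrow$, suppose $\DeltaL$ is closed; it suffices to show $\neg\neg d_{0} = d_{0}$, and since $d_{0}\leql\neg\neg d_{0}$ always holds, only $\neg\neg(p\wedge\rsim p)\rarr(p\wedge\rsim p)\in L$ needs proof. Closedness gives $\neg\neg d_{0}\in\DeltaL$, so there is $\psi\in\Fmls$ with $\neg\neg(p\wedge\rsim p)\leftrightarrow(\psi\wedge\rsim\psi)\in L$; replacing every variable of $\psi$ other than $p$ by the constant $\neg\bot$ we may assume $\psi=\psi(p)$. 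Using the theorems $\neg\neg(\chi\wedge\xi)\leftrightarrow(\neg\neg\chi\wedge\neg\neg\xi)$, $\rsim\neg\neg\chi\leftrightarrow\neg\chi$, $\rsim\rsim\chi\leftrightarrow\chi$ of \Nf\ and the de Morgan law $\rsim(\chi\wedge\xi)\leftrightarrow(\rsim\chi\vee\rsim\xi)$, this relation reads, in $\TL$, as $[\psi(p)]_L\wedge[\rsim\psi(p)]_L = \neg\neg[p]_L\wedge\neg\neg[\rsim p]_L$. I would then feed $\psi(p)$ back into this identity (and, via $\rsim\rsim\chi\leftrightarrow\chi$, also the formula $\psi(\rsim p)$), using the closure of $L$ under $(\mathrm{PR})$ and $(\mathrm{WR})$, to collapse the outer double negations and conclude $\neg\neg p\wedge\neg\neg\rsim p\rarr p\in L$; an application of $(\mathrm{SUB})$ with $p\mapsto\rsim p$ then yields $\neg\neg p\wedge\neg\neg\rsim p\rarr\rsim p\in L$ as well, whence $\neg\neg(p\wedge\rsim p)\rarr(p\wedge\rsim p)\in L$.

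The hard part will be exactly this collapsing step, and it cannot be purely formal: a closed ideal of an arbitrary Heyting algebra may perfectly well contain non-regular elements, so the argument must exploit that $\DeltaL$ is the ideal of the Lindenbaum twist-structure $\WL$ of a logic in $\ENf$. The two features I expect to use are that the elements of $\DeltaL$ are precisely the classes $[\varphi]_L$ possessing a representative $\varphi'$ with $\rsim\varphi'\in L$ (equivalently $(d,1)\in W_{L}$ for each $d\in\DeltaL$, since $1\in\nablaL$), and that $[p]_L$ and $[\rsim p]_L$ are subject in $\TL$ only to the relations forced by $L$ itself, so that an identity established for $\psi(p)$ transports under substitution. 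An alternative route would be contrapositive: from a Kripke or algebraic model of $L$ witnessing $\neg\neg(p\wedge\rsim p)\not\leftrightarrow(p\wedge\rsim p)$, upgrade it to some $d\in\DeltaL$ with $\neg\neg d\notin\DeltaL$; but I expect the direct passage through $\WL$ to be the cleaner one.
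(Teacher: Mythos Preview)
Your $\Leftarrow$ direction and the reduction to $\psi=\psi(p)$ are fine, and you correctly identify that the whole difficulty lies in the ``collapsing step''. But you do not actually carry it out: writing $[\psi(p)\wedge\rsim\psi(p)]_L=\neg\neg[p]_L\wedge\neg\neg[\rsim p]_L$ and then ``feeding $\psi(p)$ back'' via substitution does not lead anywhere obvious. Substituting $p\mapsto\psi(p)$ produces an identity about $\psi(\psi(p))$, and substituting $p\mapsto\neg\neg p$ (using $\rsim\neg\neg p\leftrightarrow\neg p$) collapses the right-hand side to $\bot$, neither of which bounds $\neg\neg(p\wedge\rsim p)$ by $p\wedge\rsim p$. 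You yourself note that a closed ideal may contain non-regular elements, so nothing purely equational in $\TL$ will do; some genuinely new ingredient is needed, and your proposal does not supply one.

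The paper's argument fills this gap by a short semantic trick rather than syntactic manipulation. Shrink the ideal to the principal ideal $\Delta_1=\{a\in\TL\mid a\leql[p\wedge\rsim p]_L\}$. Then $Tw(\TL,\nablaL,\Delta_1)$ is a \emph{subalgebra} of $\WL$, hence still a model of $L$, and in particular validates $\neg\neg(p\wedge\rsim p)\leftrightarrow(\psi\wedge\rsim\psi)$. Evaluating this formula at the canonical valuation $v(p)=([p]_L,[\rsim p]_L)$ (which lies in the smaller twist-structure since $[p\wedge\rsim p]_L\in\Delta_1$) gives $\neg\neg[p\wedge\rsim p]_L=a\wedge b$ with $(a,b)=v(\psi)$ a pair of the smaller structure; but then $a\wedge b\in\Delta_1$ by definition of the twist-structure, i.e.\ $\neg\neg[p\wedge\rsim p]_L\leql[p\wedge\rsim p]_L$, which is exactly what is required. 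The point is that passing to $\Delta_1$ converts the unusable information ``$\psi\wedge\rsim\psi$ is some element of $\DeltaL$'' into the sharp bound ``$\psi\wedge\rsim\psi\leql p\wedge\rsim p$''.
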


\begin{proof} $\Longleftarrow$. This is obvious.
	
	$\Longrightarrow$. If $\Delta_{L}$ is closed, then
	$\neg \neg (p_{1} \wedge \rsim p_{1})
	\leftrightarrow (\psi \wedge \rsim \psi) \in L$
	for some \mbox{$\psi \in \Fmls$}. By the substitution rule \mbox{(SUB)}
	we can consider $\psi = \psi (p_{1})$ such that it depends
	only on $p_{1}$.
	
	Define the following ideal in \TL:
	\[
		\Delta_{1} := \{ a \in \TL \mid a \leql_{\TL}
		[p_{1} \wedge \rsim p_{1}]_{\equiv_{L}} \}.
	\]
	
	Let $\mathcal{C}$ denote $Tw(\mathcal{T}(L), \nabla_{L}, \Delta_{1})$.
	The algebra $\mathcal{C}$ is a
	subalgebra of $Tw(\mathcal{T}(L), \nabla_{L}, \Delta_{L})$, so if
	\[Tw(\mathcal{T}(L), \nabla_{L}, \Delta_{L}) \models
	\neg \neg (p_{1} \wedge \rsim p_{1})
	\leftrightarrow (\psi \wedge \rsim \psi),\]
	then
	\[\mathcal{C} \models
	\neg \neg (p_{1} \wedge \rsim p_{1})
	\leftrightarrow (\psi \wedge \rsim \psi).\]
	
	Consider the $\mathcal{C}$-valuation $v(p_{1}) :=
	( [p_{1}]_{\equiv_{L}}, [\rsim p_{1}]_{\equiv_{L}} )$ and assume that
	$v(\psi(p_1)) = (a, b) $ for some $a, b \in \TL$.
	We have
	\begin{multline*}
		\pi_{1}(v(
		\neg \neg (p_{1} \wedge \rsim p_{1})
		\leftrightarrow
		(\psi \wedge \rsim \psi)
		)) = \\
		=
		\neg \neg [p_{1} \wedge \rsim p_{1}]_{\equiv_{L}}
		\leftrightarrow
		(a \wedge b) = 1_{\TL},
	\end{multline*}
	so $\neg \neg [p_{1} \wedge \rsim p_{1}]_{\equiv_{L}} = 
	(a \wedge b)$. From $a \wedge b \in \Delta_{1}$ we get
	\[
		\neg \neg [p_{1} \wedge \rsim p_{1}]_{\equiv_{L}} \in \Delta_{1},
	\]
	or
	\[
		\neg \neg [p_{1} \wedge \rsim p_{1}]_{\equiv_{L}}
		\leql_{\TL} [p_{1} \wedge \rsim p_{1}]_{\equiv_{L}},
	\]
	which implies $\neg \neg (p_{1} \wedge \rsim p_{1})
	\rarr
	(p_{1} \wedge \rsim p_{1}) \in L.$
	From $q \rarr \neg \neg q \in \Int$ we obtain
	$\neg \neg (p_{1} \wedge \rsim p_{1})
	\leftrightarrow
	(p_{1} \wedge \rsim p_{1}) \in L$. \qedhere
	
\end{proof}

\begin{theorem}\label{t413}
	
	Let $L$ be a logic that extends
	\[\Nf + \{ 
	\neg \neg (p \wedge \rsim p)
	\leftrightarrow
	(p \wedge \rsim p)\}.\] Then $\taub L$ is a
	modal companion of $L$.
	 
\end{theorem}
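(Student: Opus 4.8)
The plan is to reduce Theorem~\ref{t413} to Proposition~\ref{p411} by exhibiting, for an arbitrary logic $L$ extending $\Nf + \{ \neg \neg (p \wedge \rsim p) \leftrightarrow (p \wedge \rsim p) \}$, a proper model of $L$ whose ideal is closed. The natural candidate is the Lindenbaum twist-structure $\WL$ over the Heyting algebra $\TL$, which by the discussion preceding the theorem is always a proper model of $L$ with invariants $\nablaL$ and $\DeltaL$. So the whole content of the theorem is: the hypothesis on $L$ forces $\DeltaL$ to be closed in $\TL$.

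First I would invoke the previous proposition (the one stating $\DeltaL$ is closed $\iff \neg \neg (p \wedge \rsim p) \leftrightarrow (p \wedge \rsim p) \in L$). Since $L$ extends $\Nf + \{ \neg \neg (p \wedge \rsim p) \leftrightarrow (p \wedge \rsim p) \}$, by definition of $L + X$ we have $\neg \neg (p \wedge \rsim p) \leftrightarrow (p \wedge \rsim p) \in L$, hence $\DeltaL$ is closed, i.e. $N(\DeltaL) = \DeltaL$. Then $\WL = Tw(\TL, \nablaL, \DeltaL)$ is a proper model of $L$ with closed ideal, and Proposition~\ref{p411} applies directly to give that $\taub L$ is a modal companion of $L$. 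That is essentially the entire argument.

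There is essentially no obstacle here: the theorem is a clean corollary of the two preceding results together with the known fact that $\WL$ is a proper model of $L$. The only point requiring a word of care is that Proposition~\ref{p411} is phrased in terms of "$L$ has a proper model with closed ideal" and "closed ideal" there means $N(\Delta) = \Delta$ for the twist-structure $Tw(\bfA, \nabla, \Delta)$, so one should note that the notion of closedness of an ideal of a twist-structure (via $N$) matches the condition appearing in the previous proposition (via $\neg\neg a \in \Delta$), which is immediate from the definition of $N(\Delta)$ as the least closed ideal containing $\Delta$: $\Delta$ is closed exactly when $N(\Delta) = \Delta$. With that identification the chain of implications is automatic, so I expect writing the proof to be a two-line affair citing Proposition~\ref{p411}, the previous proposition, and the fact that $\WL$ is a proper model of $L$.
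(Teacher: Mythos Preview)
Your proposal is correct and matches the paper's proof almost verbatim: the paper simply cites the previous proposition to conclude that $\WL$ is a proper model of $L$ with closed ideal, and then applies Proposition~\ref{p411}.
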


\begin{proof}
	
	According to the previous proposition, \WL is a proper model of $L$
	with closed ideal. Now, the conclusion follows from
	Proposition~\ref{p411}. \qedhere
	
\end{proof}

Theorem~\ref{t413} immediately implies

\begin{corollary}
	
	Each extension of the explosive Nelson's logic
	$\mathsf{N3}^{\bot} = \Nf +
	\{ \neg (p \wedge \rsim p) \}$ has a modal companion.
	
\end{corollary}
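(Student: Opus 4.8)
The plan is to derive this from Theorem~\ref{t413} by checking that $\mathsf{N3}^\bot$, and hence every extension of it, lies above the logic $\Nf + \{\neg\neg(p\wedge{\sim}p)\leftrightarrow(p\wedge{\sim}p)\}$. Since $\mathcal{E}$ of a logic is closed under arbitrary intersections and every extension of $\mathsf{N3}^\bot$ is in particular an extension of $\mathsf{N3}^\bot$, it suffices to prove the single inclusion
\[
	\Nf + \{\neg(p\wedge{\sim}p)\} \supseteq \Nf + \{\neg\neg(p\wedge{\sim}p)\leftrightarrow(p\wedge{\sim}p)\},
\]
i.e.\ that $\neg\neg(p\wedge{\sim}p)\leftrightarrow(p\wedge{\sim}p)$ is derivable in $\mathsf{N3}^\bot$.

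First I would recall the two halves of the target equivalence separately. The implication $(p\wedge{\sim}p)\rarr\neg\neg(p\wedge{\sim}p)$ is already available: it is an instance of $q\rarr\neg\neg q$, which belongs to $\Int\subseteq\Nf\subseteq\mathsf{N3}^\bot$, exactly as used at the end of the proof of the preceding proposition. So the whole content is the converse implication $\neg\neg(p\wedge{\sim}p)\rarr(p\wedge{\sim}p)$. Here I would substitute $p\wedge{\sim}p$ for $q$ in an intuitionistically valid schema; the natural one is that $\neg q$ is intuitionistically ``stable'', i.e.\ $\neg\neg\neg q\rarr\neg q$, equivalently $\neg\neg q\rarr\neg\neg q$ — but what is actually needed is that $p\wedge{\sim}p$ is provably equivalent in $\mathsf{N3}^\bot$ to a \emph{negated} formula, since negated formulas are stable. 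Concretely, the axiom $\neg(p\wedge{\sim}p)$ of $\mathsf{N3}^\bot$ together with $\bot\rarr r$ gives that $p\wedge{\sim}p$ implies anything; combined with $q\rarr\neg\neg q$ this lets one show $p\wedge{\sim}p \leftrightarrow (p\wedge{\sim}p)\wedge\neg(p\wedge{\sim}p)$ inside $\mathsf{N3}^\bot$, but more directly: from $\neg(p\wedge{\sim}p)\in\mathsf{N3}^\bot$ one gets $(p\wedge{\sim}p)\rarr\bot\in\mathsf{N3}^\bot$, hence $\neg\neg(p\wedge{\sim}p)\rarr\neg\neg\bot\in\mathsf{N3}^\bot$ by the intuitionistic monotonicity of $\neg\neg$ (property (3) applied twice, or just propositional reasoning), and $\neg\neg\bot\rarr\bot\in\Int$, so $\neg\neg(p\wedge{\sim}p)\rarr\bot\in\mathsf{N3}^\bot$, which with $\bot\rarr(p\wedge{\sim}p)\in\Int$ yields $\neg\neg(p\wedge{\sim}p)\rarr(p\wedge{\sim}p)\in\mathsf{N3}^\bot$. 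Putting the two implications together with the definition of $\leftrightarrow$ and closure under $(\mathrm{MP})$ and conjunction gives $\neg\neg(p\wedge{\sim}p)\leftrightarrow(p\wedge{\sim}p)\in\mathsf{N3}^\bot$.

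Having established $\mathsf{N3}^\bot \in \mathcal{E}\big(\Nf + \{\neg\neg(p\wedge{\sim}p)\leftrightarrow(p\wedge{\sim}p)\}\big)$, any $L\in\mathcal{E}(\mathsf{N3}^\bot)$ also contains this axiom (it contains $\mathsf{N3}^\bot$, which contains the axiom), so $L$ extends $\Nf + \{\neg\neg(p\wedge{\sim}p)\leftrightarrow(p\wedge{\sim}p)\}$ and Theorem~\ref{t413} applies, giving that $\taub L$ is a modal companion of $L$. I do not expect any real obstacle here: the only point requiring a moment's care is the direction $\neg\neg(p\wedge{\sim}p)\rarr(p\wedge{\sim}p)$, which is purely intuitionistic propositional logic once the $\mathsf{N3}^\bot$-axiom $\neg(p\wedge{\sim}p)$ is in hand — the whole corollary is essentially the observation that the defining axiom of $\mathsf{N3}^\bot$ forces $p\wedge{\sim}p$ to behave like $\bot$, which is automatically regular/closed.
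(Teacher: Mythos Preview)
Your proposal is correct and follows the same route as the paper, which simply states that the corollary is immediate from Theorem~\ref{t413}. You have just spelled out the routine verification that $\neg(p\wedge{\sim}p)\in L$ forces $\neg\neg(p\wedge{\sim}p)\leftrightarrow(p\wedge{\sim}p)\in L$, which is exactly the implicit step the paper leaves to the reader.
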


Now we strengthen the results of \cite{OdinVish24} by placing restrictions on the axioms of a logic that will guarantee that the logic has a modal companion.

\begin{lemma}\label{l414}
	
	Let $\bfA$ be a Heyting algebra, $\nabla \in \FdA$,
	\mbox{$\Delta_{1}, \Delta_{2} \in \IA$}. And let $\varphi 
	\in \mathrm{Fm}_{\langu_{\rsim}}$ be a formula such that
	\[
		\varphi = \psi(p_{1}, \dots, p_{n},
			q_{1} \vee \rsim q_{1}, \dots, q_{m} \vee \rsim q_{m} ),\tag{\#}
			\label{Rur}
	\]
	where
	$\psi(p_{1}, \dots, p_{n}, q_{1}, \dots, q_{m}) \in \Fmli$ and
	$\{ p_{1}, \dots, p_{n} \} \cap \{ q_{1}, \dots, q_{m} \} =
	\varnothing$.
	Then
	\[
		Tw(\bfA, \nabla, \Delta_{1}) \models \varphi
		\iff
		Tw(\bfA, \nabla, \Delta_{2}) \models \varphi
	\]
	
\end{lemma}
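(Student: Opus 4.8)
The plan is to exploit the fact that the special form~\eqref{Rur} of $\varphi$ makes its value under any valuation $v$ in a twist-structure $Tw(\bfA,\nabla,\Delta)$ depend on $\Delta$ only through whether certain elements lie in $\Delta$, and that these elements are \emph{forced into} $\Delta$ automatically by the constraint $q_j \vee \rsim q_j \in \nabla$. Concretely, let $v$ be a $Tw(\bfA,\nabla,\Delta_k)$-valuation with $v(p_i) = (a_i,b_i)$ and $v(q_j) = (c_j,d_j)$. Then $v(q_j \vee \rsim q_j) = (c_j \vee d_j,\ c_j \wedge d_j)$, and by Remark~\ref{rem1} the first coordinate of $v(\varphi)$ equals $\psi(a_1,\dots,a_n,\, c_1 \vee d_1,\dots,c_m \vee d_m)$, which is computed purely in $\bfA$ and does not mention $\Delta_k$ at all. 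So $\pi_1(v(\varphi)) = 1_{\bfA}$ is a condition independent of $k$; the only role played by $\Delta_k$ is in determining which pairs $(a_i,b_i)$, $(c_j,d_j)$ are admissible as values of variables.

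First I would set up the two directions symmetrically: it suffices to show that if $Tw(\bfA,\nabla,\Delta_1) \not\models \varphi$ then $Tw(\bfA,\nabla,\Delta_2) \not\models \varphi$, and the reverse by swapping $\Delta_1$ and $\Delta_2$. So fix a $Tw(\bfA,\nabla,\Delta_1)$-valuation $v$ with $\pi_1(v(\varphi)) \neq 1_{\bfA}$, with $v(p_i) = (a_i,b_i)$ and $v(q_j) = (c_j,d_j)$. The key step is to replace $v$ by a valuation $v'$ that is legal in $Tw(\bfA,\nabla,\Delta_2)$, keeps the $p_i$-values essentially the same (so that $\pi_1$ of $\varphi$ is unchanged), and handles the $q_j$ so that the disjunction $q_j \vee \rsim q_j$ still has the same first coordinate. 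The natural choice is to set $v'(q_j) := (c_j \vee d_j,\ \bot)$: this is a legal pair in \emph{any} twist-structure over $\bfA$ with filter $\nabla$, because $(c_j \vee d_j) \vee \bot = c_j \vee d_j \in \nabla$ (it was already in $\nabla$ as $\pi_1 \rsim$-part of a legal pair, using that $\nabla$ is a filter and $c_j \vee d_j \geql c_j \vee d_j$... more precisely $c_j \vee d_j \in \nabla$ since $(c_j,d_j)$ legal means $c_j \vee d_j \in \nabla$), and $(c_j \vee d_j) \wedge \bot = \bot \in \Delta_2$ trivially. Then $v'(q_j \vee \rsim q_j) = (c_j \vee d_j,\ \bot)$, whose first coordinate $c_j \vee d_j$ matches that of $v(q_j \vee \rsim q_j)$.

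For the $p_i$ the issue is subtler, since an arbitrary pair $(a_i,b_i)$ legal for $\Delta_1$ need not be legal for $\Delta_2$. Here I would use that $\varphi$ depends on $p_i$ only in the \emph{positive} combination $\psi \in \Fmli$ on first coordinates, so it is enough to control $\pi_1$. One can replace $v(p_i) = (a_i,b_i)$ by $v'(p_i) := (a_i, \bot)$: again $(a_i,\bot)$ is legal for $Tw(\bfA,\nabla,\Delta_2)$ provided $a_i \vee \bot = a_i \in \nabla$ — but this need not hold. The fix is instead $v'(p_i) := (a_i, b_i')$ where we keep $b_i' := b_i$ but must check $a_i \wedge b_i \in \Delta_2$; this is exactly what can fail. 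So the honest route: set $v'(p_i) := (a_i,\, \bot)$ if $a_i \in \nabla$, and otherwise note $a_i \vee b_i \in \nabla$ forces, together with $\nabla$ being a filter, no clean conclusion — hence I would instead argue that since $\nabla \in \FdA$ contains all dense elements and $a_i \vee b_i \in \nabla$, we have $a_i \vee b_i$ dense-or-above-dense, and more importantly, replace $(a_i,b_i)$ by $(a_i \vee \neg a_i,\ \bot)$ whenever necessary — but this changes $\pi_1$. I expect \textbf{this coordinate-replacement for the $p_i$} to be the main obstacle: one needs a uniform way, valid for both $\Delta_1$ and $\Delta_2$, to realize the first coordinates $a_1,\dots,a_n \in A$ as first components of legal pairs in $Tw(\bfA,\nabla,\Delta_k)$. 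The clean solution is to observe that $(a_i,\ \neg a_i)$ is legal in \emph{every} such twist-structure, since $a_i \vee \neg a_i \in F_d(\bfA) \subseteq \nabla$ and $a_i \wedge \neg a_i = \bot \in \Delta_k$; replacing $v(p_i)$ by $(a_i, \neg a_i)$ preserves $\pi_1(v(p_i)) = a_i$, hence preserves $\pi_1(v(q_j \vee \rsim q_j))$ and therefore preserves $\pi_1(v(\varphi)) \neq 1_{\bfA}$. Doing the same trick with $v'(q_j) := (c_j \vee d_j,\ \neg(c_j \vee d_j))$ makes everything legal for $\Delta_2$, disjointness of the variable sets guarantees no conflict, and we conclude $Tw(\bfA,\nabla,\Delta_2) \not\models \varphi$.
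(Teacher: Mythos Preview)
Your approach is essentially the paper's: both rest on the observation (your first paragraph, the paper's Remark~\ref{rem1}) that $\pi_1(v(\varphi)) = \psi(a_1,\dots,a_n,\,c_1\vee d_1,\dots,c_m\vee d_m)$ is computed purely in~$\bfA$, and both then realize arbitrary tuples $(a_i)\in A^n$, $(c_j)\in\nabla^m$ by pairs that are legal regardless of the ideal. The paper argues directly, you contrapositively; the paper invokes surjectivity of $\pi_1$ to get some pair $(a_i,b_i')$, you write down the explicit witness $(a_i,\neg a_i)$ --- same content.

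One genuine slip in your last sentence: the ``same trick'' $v'(q_j) := (c_j\vee d_j,\ \neg(c_j\vee d_j))$ does \emph{not} work. With that choice
\[
\pi_1\bigl(v'(q_j\vee\rsim q_j)\bigr) \;=\; (c_j\vee d_j)\,\vee\,\neg(c_j\vee d_j),
\]
which can strictly exceed $c_j\vee d_j$ whenever $c_j\vee d_j\in\nabla$ is not dense (possible since $\nabla$ is only required to \emph{contain} $F_d(\bfA)$). For a concrete failure, take $\bfA$ Boolean, $\nabla$ a proper filter with a non-unit element $x$, and $\varphi = q_1\vee\rsim q_1$; then the original value is $x\neq 1$ but your modified value is $x\vee\neg x = 1$. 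Keep your earlier, correct choice $v'(q_j) := (c_j\vee d_j,\bot)$, which you had already verified preserves the needed first coordinate.
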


Note that in case $m = 0$ formulas of the form~\eqref{Rur} are just formulas of \Fmli.

\begin{proof}
	
	It is enough to prove that
	\[
		Tw(\bfA, \nabla, \Delta_{1}) \models \varphi
		\Longrightarrow
		Tw(\bfA, \nabla, \Delta_{2}) \models \varphi.
	\]
	
	Given a valuation
	$v \colon \mathrm{Prop} \rarr Tw(\bfA, \nabla, \Delta_{1})$,
	assume that
	$v(p_{i}) = (x_{i}, y_{i})$ and $v(q_{j}) = (a_{j}, b_{j})$.
	Then we have
	\begin{multline*}
		\pi_{1}(v(\varphi)) =\\
		=\psi(
			\pi_{1}(v(p_{1})), \dots, \pi_{1}(v(p_{n})),
			\pi_{1}(v(q_{1} \vee \rsim q_{1})),
			\dots,
			\pi_{1}(v(q_{m} \vee \rsim q_{m}))
		) = \\
		= \psi(
			x_{1}, \dots, x_{n},
			a_{1} \vee b_{1},
			\dots,
			a_{m} \vee b_{m}
		).
	\end{multline*}
	
	Let's show that
	$
		\psi(
			d_{1}, \dots, d_{n},
			c_{1}, \dots, c_{m}
		) = 1_{\bfA}
	$
	for all $d_{1}, \dots, d_{n} \in \bfA$ and
	$c_{1}, \dots, c_{m} \in \nabla$.
	Indeed, in view of $\pi_{1}(Tw(\bfA, \nabla, \Delta_{1})) = \bfA$,
	we can define the $Tw(\bfA, \nabla, \Delta_{1})$-valuation
	$v_{0}(p_{i}) := (d_{i}, d'_{i})$ for some $d'_{1}, \dots, d'_{n} \in
	\bfA$. The condition $c \in \nabla$ implies
	$c \vee \bot = c \in \nabla$ and
	$c \wedge \bot = \bot \in \Delta_{1}$, so we can define
	$ v_{0}(q_{j}) := ( c_{j} , \bot )$. Thus, we have by Remark~\ref{rem1}
	\[
		\pi_{1}(v_{0}(\varphi)) = \psi(
			d_{1}, \dots, d_{n},
			c_{1}, \dots, c_{m}
		) = 1_{\bfA}.
	\]
	
	Now for an arbitrary valuation
	$v \colon \mathrm{Prop} \rarr Tw(\bfA, \nabla, \Delta_{2})$ such that
	$v(p_{i}) = (e_{i}, f_{i})$ and $v(q_{j}) = (z_{j}, g_{j})$
	we have
	\[
		\pi_{1}(v(\varphi))
		=\psi(
			e_{1}, \dots, e_{n},
			z_{1} \vee g_{1},
			\dots,
			z_{m} \vee g_{m}
		) = 1_{\bfA},
	\] since $e_{1}, \dots, e_{n} \in \bfA$ and
	$z_{1} \vee g_{1}, \dots, z_{m} \vee g_{m} \in \nabla$. \qedhere
	
\end{proof}

\begin{theorem}\label{t415}
	
	Suppose that $L \in \mathcal{E}\Nf$ can be axiomatized as
	\[
		L = \Nf + X,
	\] where $X$ is some set of formulas of the form~\eqref{Rur}. Then
	$\taub L$ is a modal companion of $L$.
	
\end{theorem}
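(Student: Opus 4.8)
The plan is to reduce the general statement to Proposition~\ref{p411}, exactly as in Theorem~\ref{t413}, by exhibiting a proper model of $L$ whose ideal is closed. The natural candidate is the Lindenbaum twist-structure $\WL$, which is always a proper model of $L$; so the whole task is to show that, under the axiomatization hypothesis, $\DeltaL$ is closed, i.e. $\neg\neg a \in \DeltaL$ for every $a \in \DeltaL$. Equivalently, by the proposition preceding Theorem~\ref{t413}, it suffices to show $\neg\neg(p \wedge \rsim p) \leftrightarrow (p \wedge \rsim p) \in L$.

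First I would try the direct approach: take $L = \Nf + X$ with every formula of $X$ of the form~\eqref{Rur}, and compare $\WL = Tw(\TL, \nablaL, \DeltaL)$ with the modified twist-structure $\mathbf{C} := Tw(\TL, \nablaL, N(\DeltaL))$, where $N(\DeltaL)$ is the closure of $\DeltaL$. Since $N(\DeltaL) \supseteq \DeltaL$ is an ideal of $\TL$, $\mathbf{C}$ is again a twist-structure over $\TL$ with the same filter but a larger (and now closed) ideal. The key point is Lemma~\ref{l414}: for any formula $\varphi$ of the form~\eqref{Rur} and any two ideals $\Delta_1, \Delta_2$, validity of $\varphi$ in $Tw(\bfA, \nabla, \Delta_1)$ is equivalent to validity in $Tw(\bfA, \nabla, \Delta_2)$. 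Applying this with $\bfA = \TL$, $\nabla = \nablaL$, $\Delta_1 = \DeltaL$, $\Delta_2 = N(\DeltaL)$, we get that every axiom of $X$ that holds in $\WL$ also holds in $\mathbf{C}$; and the axioms of $\Nf$ hold in $\mathbf{C}$ automatically since it is a twist-structure over a Heyting algebra. Hence $\mathbf{C} \models L$, so $L \subseteq \mathrm{L}\,\mathbf{C}$.

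Now I would argue that $\mathbf{C}$ is in fact a \emph{proper} model of $L$, i.e. $\mathrm{L}\,\mathbf{C} = L$. The inclusion $L \subseteq \mathrm{L}\,\mathbf{C}$ was just shown; for the converse, note that $\mathbf{C}$ is a quotient of $\WL$ in an appropriate sense — more precisely, the twist-structure $\WL$ is a subalgebra of $\mathbf{C}$ is false in general, so instead I would run Lemma~\ref{l414} in the other direction: if $\varphi \notin L$ then $\varphi$ fails in some proper model, and since $\WL$ is proper, $\WL \not\models \varphi$. But to transfer this refutation to $\mathbf{C}$ I cannot use Lemma~\ref{l414} unless $\varphi$ has the shape~\eqref{Rur}, which arbitrary non-theorems do not. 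The cleaner route, which I expect to be the intended one, is to observe that it is enough to prove $\neg\neg(p \wedge \rsim p) \leftrightarrow (p \wedge \rsim p) \in L$ and then invoke Theorem~\ref{t413} verbatim. So I would instead show directly that $\neg\neg(p \wedge \rsim p) \leftrightarrow (p \wedge \rsim p)$ is valid in $\WL$, hence is a theorem of $L$: since $\neg\neg(p\wedge\rsim p)\leftrightarrow(p\wedge\rsim p)$ is itself a formula whose validity in $Tw(\TL,\nablaL,\Delta)$ is insensitive to the ideal $\Delta$ — this is exactly where I need to check that $\neg\neg(p\wedge\rsim p)\leftrightarrow(p\wedge\rsim p)$ falls under (a mild extension of) the scope of Lemma~\ref{l414}, noting that on first components $\pi_1(v(p\wedge\rsim p)) = x \wedge y$ and the statement $\neg\neg(x\wedge y) = x\wedge y$ can be forced by choosing $v(p) = (a, \bot)$ with $a$ arbitrary, reducing it to $\neg\neg a = a$ in $\TL$, which need not hold — so this particular formula is \emph{not} of the form~\eqref{Rur} and the naive attempt fails.

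The main obstacle, then, is precisely the mismatch just exposed: one cannot conclude $\neg\neg(p\wedge\rsim p)\leftrightarrow(p\wedge\rsim p)\in L$ from the shape of the axioms, because that formula is not of the form~\eqref{Rur}. The correct strategy must therefore bypass the proposition before Theorem~\ref{t413} and instead apply Proposition~\ref{p411} to the model $\mathbf{C} = Tw(\TL, \nablaL, N(\DeltaL))$ directly: I would prove $\mathbf{C} \models L$ as above via Lemma~\ref{l414}, note that $N(\DeltaL)$ is closed by construction (Lemma~\ref{l331} and the definition of $N$), and then — this is the crux — establish that $\mathbf{C}$ is a proper model of $L$, or at least that $\taub L$ embeds $L$ faithfully using $\mathbf{C}$. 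For the latter it suffices that $\mathbf{C} \models \taub L$ and that for every $\varphi \notin L$ we have $\Tb\varphi \notin \taub L$; by Theorem~\ref{t332} applied to $\mathbf{C}$, $Tw(s(\TL), \rho(\nablaL), \sigma(N(\DeltaL))) \models \Tb\varphi \iff \mathbf{C} \models \varphi$, so it remains to check that $\varphi \notin L$ implies $\mathbf{C} \not\models \varphi$ — that is, that passing from $\WL$ to $\mathbf{C}$ does not validate any new non-theorems of $L$. This is the one genuinely delicate point, and I would handle it by showing that the canonical map $\WL \to \mathbf{C}$ is injective (it is the identity on first components and on $\WL \subseteq \TL \times \TL$, and $N(\DeltaL) \cap \{a \wedge b : (a,b) \in \WL\}$ controls the second components), so that $\WL$ embeds into $\mathbf{C}$ and hence $\mathrm{L}\,\mathbf{C} \subseteq \mathrm{L}\,\WL = L$. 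Combined with $L \subseteq \mathrm{L}\,\mathbf{C}$ this gives properness, and Proposition~\ref{p411} finishes the proof.
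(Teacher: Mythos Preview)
Your final approach is correct and is precisely the paper's proof, but you take an unnecessarily circuitous route to reach it and make one wrong assertion along the way.

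The paper's argument is three lines: (i) since $\DeltaL \subseteq N(\DeltaL)$, the Lindenbaum twist-structure $\WL = Tw(\TL,\nablaL,\DeltaL)$ is a subalgebra of $\mathbf{C} := Tw(\TL,\nablaL,N(\DeltaL))$, hence $\mathrm{L}\,\mathbf{C} \subseteq \mathrm{L}\,\WL = L$; (ii) by Lemma~\ref{l414} every $\varphi \in X$ valid in $\WL$ is valid in $\mathbf{C}$, so $\mathbf{C} \models L$; (iii) therefore $\mathbf{C}$ is a proper model of $L$ with closed ideal $N(\DeltaL)$, and Proposition~\ref{p411} applies.

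Your detour stems from the claim that ``$\WL$ is a subalgebra of $\mathbf{C}$ is false in general''. This is incorrect: if $(a,b) \in \WL$ then $a \wedge b \in \DeltaL \subseteq N(\DeltaL)$ and $a \vee b \in \nablaL$, so $(a,b) \in \mathbf{C}$; the operations agree because both algebras are subalgebras of $\TL^{\bowtie}$. This is exactly the embedding you eventually invoke at the end, so your own argument refutes your earlier dismissal. The attempts to derive $\neg\neg(p\wedge\rsim p)\leftrightarrow(p\wedge\rsim p) \in L$ are indeed doomed (that formula is not of the form~\eqref{Rur} and need not lie in $L$), but they are also unnecessary: Proposition~\ref{p411} only requires \emph{some} proper model with closed ideal, not that $\WL$ itself has closed ideal.
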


\begin{proof}
	
	The algebra $Tw(\TL, \nablaL, \DeltaL)$ is a subalgebra
	of the twist-structure $Tw(\TL, \nablaL, N(\DeltaL))$, so
	\[
		\mathrm{L} Tw(\TL, \nablaL, \DeltaL)
		\supseteq
		\mathrm{L} Tw(\TL, \nablaL, N(\DeltaL)).
	\]
	
	On the other hand, Lemma~\ref{l414} guarantees that for every
	$\varphi \in X$,
	\[
		Tw(\TL, \nablaL, \DeltaL) \models \varphi
		\Longrightarrow
		Tw(\TL, \nablaL, N(\DeltaL)) \models \varphi.
	\]
	
	Thus,
	\[
		L = \mathrm{L} Tw(\TL, \nablaL, \DeltaL)
		=
		\mathrm{L}
		Tw(\TL, \nablaL, N(\DeltaL)),
	\]
	so $Tw(\TL, \nablaL, N(\DeltaL))$ is
	a proper model of $L$ with closed ideal. Now, we apply
	Proposition~\ref{p411}. \qedhere
	
\end{proof}

\begin{remark}
	
	Theorem~\ref{t415} implies that the so-called {\em special extensions
	of} \Nf having the form $\Nf + L$
	and the {\em normal special extensions of} \Nf of the form
	$\Nf + L + \{ \neg \neg (p \vee \rsim p) \}$,
	where $L \in \EInt$, have modal companions.
	
\end{remark}

\subsection{Logics with no modal companions}

Consider the {\em Kleene \mbox{axiom}}\footnote{We call $\chi$ the Kleene axiom due to its similarity to the identity $x \wedge \rsim x \leql y \vee \rsim y$ that distinguishes Kleene algebras \cite{Cignoli} in the variety of De Morgan algebras. Note that the term "Kleene algebra" also has a different meaning. See, for example, \cite{Kozen2002}.} and the {\em modified Kleene axiom}:
\begin{align*}
\chi & = (p \wedge \rsim p) \rarr (q \vee \rsim q),\\
\chi' & = \neg \neg (p \wedge \rsim p) \rarr (q \vee \rsim q).
\end{align*}
The {\em Kleene logic} is defined as $\Nk := \Nf +
\{ \text{\raisebox{1pt}{$\chi$}}\}$.

It is not hard to see that
\[
	Tw(\bfA, \nabla, \Delta) \models \chi
	\quad \Longleftrightarrow \quad
	a \leql b \enskip \text{for all}\ a \in \Delta\ \text{and}\
	b \in \nabla.
\]
Thus, the twist-structure $Tw(\mathbf{3}, \nabla, \Delta)$ is a model of \Nk, where $\mathbf{3} = \langle \{ \bot, \heartsuit, 1 \}; \vee, \wedge,
\rarr, \bot \rangle$ with $\bot \leql \heartsuit \leql 1$,
$\nabla = \{ \heartsuit, 1 \}$ and $\Delta = \{ \bot, \heartsuit \}$.

Notice that $Tw(\mathbf{3}, \nabla, \Delta) \not\models \chi'$. Indeed, for the valuation $v$ such that $v(p) = (\heartsuit, 1)$, $v(q) = (\heartsuit, \bot)$ the following holds:
\[
	\pi_1(v(\chi')) = \neg \neg \heartsuit \rarr \heartsuit = 1 \rarr
	\heartsuit = \heartsuit \not= 1.
\] So $\chi' \not\in \Nk$.

\begin{lemma}\label{l421}
	
	If $M \in \EBsf$ and $\Tb \chi \in M$, then $\Tb \chi' \in M$.
	
\end{lemma}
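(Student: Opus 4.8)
The plan is to argue in the Lindenbaum twist-structure $\WM = Tw(\TM, \nablaM, \DeltaM)$, which is a proper model of $M$; then the claim amounts to: \emph{if $\WM \models \Tb\chi$ then $\WM \models \Tb\chi'$}. Recall that $\WM$ is a twist-structure over the \TBA $\TM$, so its ideal $\DeltaM = \{ a \wedge b \mid (a,b) \in \WM \}$ belongs to $\mathcal{I}_{\Diamond}(\TM)$; in particular it is closed under $\Diamond$.

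First I would unwind the two translations. After expanding $\neg$ as $\cdot \rarr \bot$, both $\Tb\chi$ and $\Tb\chi'$ are built from the ``atoms'' $\Box p, \Box\rsim p, \Box q, \Box\rsim q, \bot$ using only the operations of \langub, with $\rsim$ occurring just in front of variables. Hence, for a $\WM$-valuation $v$ with $v(p) = (a,b)$ and $v(q) = (c,d)$, I can read off the first components using that $\pi_1$ commutes with the operations of \langub (Remark~\ref{rem1}), together with $\pi_1(v(\rsim p)) = b$, $\pi_1(v(\rsim q)) = d$, the \TBA identity $\Box a \wedge \Box b = \Box(a \wedge b)$, and $\Diamond x = \neg\Box\neg x$. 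The expected outcome, all computed in $\TM$, is
\[
	\pi_1(v(\Tb\chi)) = \Box\big( \Box(a \wedge b) \rarr (\Box c \vee \Box d) \big),
	\qquad
	\pi_1(v(\Tb\chi')) = \Box\big( \Box\Diamond\Box(a \wedge b) \rarr (\Box c \vee \Box d) \big).
\]
Since in a \TBA one has $\Box t = 1 \iff t = 1$ and $t_1 \rarr t_2 = 1 \iff t_1 \leql t_2$, validity of $\Tb\chi$ in $\WM$ is equivalent to ``$\Box(a \wedge b) \leql \Box c \vee \Box d$ for all $(a,b),(c,d) \in \WM$'', and validity of $\Tb\chi'$ to the same statement with $\Box\Diamond\Box(a \wedge b)$ in place of $\Box(a \wedge b)$.

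Granting this, the rest is a short order-theoretic argument. Assume $\WM \models \Tb\chi$ and fix $(a,b),(c,d) \in \WM$. Then $a \wedge b \in \DeltaM$, hence $\Diamond(a \wedge b) \in \DeltaM$ since $\DeltaM$ is closed under $\Diamond$, and therefore $\Diamond(a \wedge b) = x \wedge y$ for some $(x,y) \in \WM$; applying the hypothesis to the pairs $(x,y)$ and $(c,d)$ yields $\Box\Diamond(a \wedge b) \leql \Box c \vee \Box d$. From $\Box(a \wedge b) \leql a \wedge b$ and monotonicity of $\Diamond$ and $\Box$ we get $\Box\Diamond\Box(a \wedge b) \leql \Box\Diamond(a \wedge b)$, so $\Box\Diamond\Box(a \wedge b) \leql \Box c \vee \Box d$; as $(a,b),(c,d)$ were arbitrary, $\WM \models \Tb\chi'$, i.e.\ $\Tb\chi' \in M$.

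I expect the only delicate point to be the first step: expanding $\Tb\chi'$ correctly --- in particular the double negation $\neg\neg(p \wedge \rsim p)$, whose translation unfolds through $\Tb(\psi \rarr \bot) = \Box(\Tb\psi \rarr \bot)$ applied twice and then collapses to $\Box\Diamond\Box(p \wedge \rsim p)$ via $\Diamond = \neg\Box\neg$ --- and confirming that the occurrences of $\rsim$, which $\Tb$ pushes in front of variables, do not interfere with the component-wise evaluation. Everything after that rests on the one non-routine ingredient, namely that the ideal of a twist-structure over a \TBA is closed under $\Diamond$.
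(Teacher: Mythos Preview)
Your proof is correct and follows essentially the same approach as the paper's: work in the Lindenbaum twist-structure $\WM$, unwind the two translations to inequalities in $\TM$, and use that $\DeltaM$ is a closed ideal (closed under $\Diamond$) to pass from the $\Tb\chi$-inequality to the $\Tb\chi'$-inequality. The only cosmetic difference is in the order-theoretic step: the paper applies closure under $\Diamond$ to $\Box x$ (noting $\Diamond\Box x \in \DeltaM$ and hence $\Box\Diamond\Box x \leql \Box c \vee \Box d$ directly), whereas you apply it to $a \wedge b$ and then bound $\Box\Diamond\Box(a\wedge b) \leql \Box\Diamond(a\wedge b)$; both arrive at the same conclusion.
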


\begin{proof}
	
	We have $\WM \models \Tb \chi$, where \WM is
	the Lindenbaum twist-structure of $M$. Since
	$
		\Tb \chi =
		\Box(
			(\Box p \wedge \Box \rsim p)
			\rarr
			(\Box q \vee \Box \rsim q)
		)
	$, we also have $\WM \models
	(\Box p \wedge \Box \rsim p) \rarr (\Box q \vee \Box \rsim q)$,
	which means that \mbox{$\Box x \leql (\Box a \vee \Box b)$} for all
	$x \in \DeltaM$ and $(a, b) \in \WM$.
	
	Since $\DeltaM \in \mathcal{I}_{\Diamond}(\TM)$, if $x \in \DeltaM$,
	then $\Diamond \Box x \in \DeltaM$, and we have that
	$\Box \Diamond \Box x \leql (\Box a \vee \Box b)$ for all
	$x \in \DeltaM$ and $(a, b) \in \WM$. Thus,
	\[
		\WM \models
		\Box(
			\Box \neg \Box \neg (\Box p \wedge \Box \rsim p)
			\rarr
			(\Box q \vee \Box \rsim q)
		),
	\]
	i.e. $\Tb \chi' \in M$. \qedhere
	
\end{proof}

We have that $\chi' \not\in \Nk$, but $\Tb \chi \in \taub \Nk$, which implies
$\Tb \chi' \in \taub \Nk$ by Lemma~\ref{l421}. We obtain that
$\taub \Nk$ is not a modal companion of \Nk, so \Nk has no modal companions.

Moreover, $\mathbf{3}$ is a model of each $L$ from the class $\{ L \in \mathcal{E}\Int \mid
\Int \subseteq L \subseteq \mathsf{HT} =
\Int + \{ p \vee (p \rarr q) \vee \neg q \} \} = \EInt
\setminus \{ \mathsf{CL}, \Fmli \}$ whose cardinality is a continuum. So
$\chi' \not\in \Nk + L$, but $\Tb \chi' \in \taub(\Nk + L)$.
And we have a continuum of logics that do not have modal companions.

In conclusion we want to raise some natural questions to explore them later:
\begin{enumerate}[1.]
	
	\item If $L \in \ENf$ has a modal companion, does it have the greatest modal companion? If yes, how to axiomatise the greatest modal companion modulo $\taub L$?
	
	\item Is there a criteria when $L$ has modal companions or not?
	
	\item Whether the set of all \Nf-extensions, which have modal companions, forms a sublattice in $\ENf$?
	
\end{enumerate}

\paragraph*{{\bfseries Acknowledgements.}} This work was supported by the Russian Science Foundation under grant no.~23-11-00104,
\href{https://rscf.ru/en/project/23-11-00104/}{https://rscf.ru/en/pro\-ject/23-11-00104/} .
Taking this opportunity, the author would also like to thank his supervisor, Sergei P.\ Odintsov, for the problem formulation and the attention to this work.

\end{document}